\title{\bf \Large Integral HOMFLY-PT and $sl(n)$-link homology}
\author{Daniel Krasner}
\date{}
\theoremstyle{plain}
\newtheorem{theorem}{Theorem}
\newtheorem{lemma}[theorem]{Lemma}
\newtheorem{claim}[theorem]{Claim}
\newtheorem{defn}[theorem]{Definition}
\def\title{\em}
\renewcommand{\xi}{x_i}
\newcommand{\TenR}{\otimes}
\newcommand{\ii}{\underline{\textbf{\textit{i}}}}
\newcommand{\jj}{\underline{\textbf{\textit{j}}}}
\newcommand{\Hom}{{\rm Hom}}
\newcommand{\HOM}{{\rm HOM}}
\newcommand{\define}{\stackrel{\mbox{\scriptsize{def}}}{=}}
\newcommand{\mc}[1]{\mathcal{#1}}
\newcommand{\ig}[2]{\vcenter{\xy (0,0)*{\includegraphics[scale=#1]{#2}} \endxy}}
\newcommand{\igc}[2]{\begin{center} \includegraphics[scale=#1]{#2} \end{center}}
\newcommand{\auptob}[2]{\xy  (0,-5)*+{#1}="1"; (0,5)*+{#2}="2"; {\ar@{|->} "1";"2"};\endxy}
\newcommand{\olC}{\overline{C}}
\newcommand{\olH}{\overline{H}}
\newcommand{\olR}{\overline{R}}
\def\R{{\mathbb R}}
\def\Z{{\mathbb Z}}
\def\Q{{\mathbb Q}}
\def\1{{\mathbbm 1}}
\begin{document}

\baselineskip 14pt
\maketitle
 
\vspace{0.1in} 

\begin{abstract} Using the diagrammatic calculus for Soergel bimodules, developed by B. Elias and M. Khovanov, as well as Rasmussen's spectral sequence, we construct an integral version of HOMFLY-PT and $sl(n)$-link homology.
\end{abstract}

\vspace{0.1in}

\footnotetext[1]{The author was partially supported by NSF grants DMS-0739392 and DMS-0706924.}

\tableofcontents

\pagebreak

\section{Introduction}
\label{sec-introduction}

\hspace{4mm} During the past half-decade categorification and, in particular, that of topological invariants has flourished into a subject of its own right. It has been a study finding connections and ramifications over a vast spectrum of mathematics, including areas such as low-dimensional topology, representation theory, algebraic geometry, as well as others. Following the original work of M. Khovanov on the categorification of the Jones polynomial in \cite{Kh1}, came a spew of link homology theories lifting other quantum invariants. With a construction that utilized a tool previously developed in an algebra-geometric context, matrix factorizations, M. Khovanov and L. Rozansky produced the $sl(n)$ and HOMFLY-PT link homology theories. Albeit computationally intensive, it was clear from the onset that thick interlacing structure was hidden within. The most insightful and influential work in uncovering these innerconnections was that of J. Rasmussen in \cite{Ras2}, where he constructed a spectral sequence from the HOMFLY-PT to the $sl(n)$-link homology. This was a major step in deconstructing the pallet of how these theories come together, yet many structural questions remained and still remain unanswered, waiting for a new approach. Close to the time of the original work, M. Khovanov produced an equivalent categorification of the HOMFLY-PT polynomial in \cite{K1}, but this time using Hochschild homology of Soergel bimodules and Rouquier complexes of \cite{Rou1}. The latter proved to be more computation-friendly and was used by B. Webster to calculate many examples in \cite{W2}.

In the meantime, a new flavor of categorification came into light. With the work of A. Lauda and M. Khovanov on the categorification of quantum groups in \cite{KL}, a diagrammatic calculus originating in the study of $2$-categories arrived into the foreground. This graphical approach proved quite fruitful and was soon used by B. Elias and M. Khovanov to rewrite the work of Soergel in \cite{EKh}, and en suite by B. Elias and the author to repackage Rouquier's complexes and to prove that they are functorial over braid-cobordisms \cite{EK} (not just projectively functorial as was known before). An immediate advantage to this construction was the inherent ease of calculation, at least comparative ease, and the fact that it worked equally well over the integers as well as over fields. 

As there has yet to be seen an integral version of either HOMFLY-PT or $sl(n)$-link homology, with the original Khovanov homology being defined over $\Z$ and torsion playing an interesting role, a natural question arose as to whether this graphical calculus could be used to define these. The definition of such integral theories is precicely the purpose of this paper. The one immediate disadvantage to the graphical approach is that at the present moment there does not exist a diagrammatic calculus for the Hochschild homology of Soergel bimodules. Hence, to define integral HOMFLY-PT homology, our paper takes a rather roundabout way, jumping between matrix factorizations and diagrammatic Rouquier complexes whenever one is deemed more advantageous than the other. For the $sl(n)$ version of the story, we add the Rasmussen spectral sequence into the mix and essentially repeat his construction in our context. 

The organization of the paper is the following: in section \ref{sec-toolkit} we give a brief account of the necessary tools (matrix factorizations, Soergel bimodules, Hochschild homology, Rouquier complexes, and corresponding diagrammatics) - the emphasis here is brevity and we refer the reader to more original sources for particulars and details; in sections \ref{sec-complex} and \ref{sec-rmoves} we describe the intergal HOMFLY-PT complex and prove the Reidemeister moves, utilizing all of the background in \ref{sec-toolkit}; section \ref{sec-ss} is devoted to the Rasmussen spectral sequence and integral $sl(n)$-link homology, and we conclude it with some remarks and questions.

Throughout the paper we will refer to a positive crossing as the one labelled $D_+$ and negative crossing as the one labelled $D_-$ in figure \ref{resolutions}. For resolutions of a crossing we will refer to $D_o$ and $D_s$ of figure \ref{resolutions} as the ``oriented" and ``singular" resolutions, respectively. We will use the following conventions for the HOMLFY-PT polynomial 

$$aP(D_-) - a^{-1}P(D_+) = (q-q^{-1})P(D_o),$$

\hspace{-8mm} with P of the unknot being $1$. Substuting $a=q^n$ we arrive at the quantum $sl(n)$-link polynomial.

\begin{figure} [!htbp]
\centerline{
\includegraphics[scale=.7]{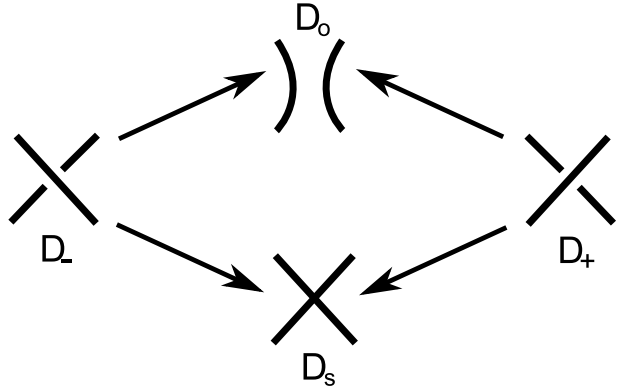}}
\caption{Crossings and resolutions} \label{resolutions}
\end{figure}

\textbf{Acknowledgments:} I would like to thank my advisor M. Khovanov for all his time and help in exploring the subjects at hand over the past few years. In addition, I thank B. Elias and P. Vaz for helpful conversations and e-mail exchanges.

\section{The toolkit}
\label{sec-toolkit}

We will require some knowledge of matrix factorizations, Soergel bimodules and Rouquier complexes, as well as the corresponding diagrammatic calculus of Elias and Khovanov \cite{EKh}. In this section the reader will find a brief survery of the necessary tools, and for more details we refer him to the following papers: for matrix factorizations \cite{KR}, \cite{Ras2}, for Soergel bimodules and Rouquier complexes and diagrammatics \cite{EKh}, \cite{EK}, \cite{K1}, \cite{Rou1}, and for Hochschild homology \cite{Ka}, \cite{K1}.

%
\subsection{Matrix factorizations}
\label{subsec-mf}
%

\begin{defn} Let $R$ be a Noetherian commutative ring, $w \in R$, and $C^*$, $*\in \Z$, a free graded $R$-module. A \emph{$\Z$-graded matrix factorization with potential $w$} consists of $C^*$ and a pair of differentials $d_{\pm}:C^* \rightarrow C^{*\pm 1}$, such that $(d_+ + d_-)^2 = w Id_{C^*}$.
\end{defn}

A morphism of two matrix factorizations $C^*$ and $D^*$ is a homomorphism of graded $R$-modules $f:C^* \rightarrow D^*$ that commutes with both $d_+$ and $d_-$. The tensor product $C^* \otimes D^*$ is taken as the regular tensor product of complexes, and is itself a matrix factorization with diffentials $d_+$ and $d_-$. A useful and easy exercise is the following:

\begin{lemma} Given two matrix factorizations $C^*$ and $D^*$ with potenials $w_c$ and $w_d$, respectively, the tensor product $C^* \otimes D^*$ is a matrix factorization with potential $w_c + w_d$.
\end{lemma}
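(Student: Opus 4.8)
The plan is to verify the defining identity $(d_+ + d_-)^2 = (w_c + w_d)\,\mathrm{Id}$ directly on the tensor product, using the standard sign conventions for the differential on a tensor product of complexes. First I would fix notation: write $C^*$ with differentials $d_+^c, d_-^c$ and potential $w_c$, and $D^*$ with $d_+^d, d_-^d$ and potential $w_d$, so that $(d_+^c + d_-^c)^2 = w_c\,\mathrm{Id}_{C^*}$ and $(d_+^d + d_-^d)^2 = w_d\,\mathrm{Id}_{D^*}$. On $C^* \otimes D^*$ the differentials are the usual ones for a tensor product of complexes, namely $d_\pm(x \otimes y) = (d_\pm^c x) \otimes y + (-1)^{|x|}\, x \otimes (d_\pm^d y)$; I would record that these are degree $\pm 1$ maps, that $D \coloneqq d_+ + d_-$ acts as $D(x\otimes y) = (D^c x)\otimes y + (-1)^{|x|} x \otimes (D^d y)$ where $D^c = d_+^c + d_-^c$ and $D^d = d_+^d + d_-^d$, and that the only point of the sign is to make the two ``halves'' anticommute.

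The key computation is then to expand $D^2(x \otimes y)$. Applying $D$ twice and being careful with the Koszul sign — the sign picked up when $D^d$ passes the degree of $x$, noting $D^c$ changes $|x|$ by $\pm 1$ so a second application of the sign rule contributes $(-1)^{|x|\pm 1} = -(-1)^{|x|}$ — one finds that the two cross terms $(D^c x) \otimes (D^d y)$ appear with opposite signs and cancel, leaving exactly $D^2(x\otimes y) = (D^c{}^2 x) \otimes y + x \otimes (D^d{}^2 y) = w_c (x\otimes y) + w_d(x \otimes y) = (w_c + w_d)(x\otimes y)$. I would present this as a short displayed calculation rather than grinding every term.

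I expect the only real obstacle — and it is a mild, bookkeeping one — to be getting the signs right: one must confirm that with the stated convention the differential $D = d_+ + d_-$ on the tensor product squares correctly even though $D$ is not homogeneous of a single degree (it mixes degrees $+1$ and $-1$), so the ``$(-1)^{|x|}$'' in the Koszul rule must be applied consistently using the degree of the element actually being passed, not a fixed degree. A clean way to sidestep any ambiguity is to observe that $d_+$ and $d_-$ individually satisfy $d_+^2 = d_-^2 = 0$ and $d_+ d_- + d_- d_+ = w\,\mathrm{Id}$ on each factor (these are equivalent to the matrix-factorization identity together with homogeneity), and that each of $d_+, d_-$ on the tensor product is just the ordinary differential on a tensor product of complexes, hence squares to zero; then only the mixed anticommutator $d_+ d_- + d_- d_+$ needs to be computed, and its cross terms cancel by the Koszul sign exactly as above, yielding $(w_c + w_d)\,\mathrm{Id}$. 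Finally I would note that $C^* \otimes D^*$ is visibly a free graded $R$-module and that $d_\pm$ have the required degrees, so all the hypotheses of the definition are met.
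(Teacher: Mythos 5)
Your proof is correct. The paper states this lemma as ``a useful and easy exercise'' and gives no proof, and your argument is the standard one it has in mind: put the Koszul-signed differentials on the tensor product, expand $(d_++d_-)^2$, and observe the cross terms cancel. Your worry about $d_++d_-$ not being homogeneous is easily dispatched exactly as you say — either note that $(-1)^{|x|}$ depends only on the parity of the element being passed over (and that applying $D^c$ to $x$ changes that parity in the same way whether one uses $d_+^c$ or $d_-^c$, so $(-1)^{|D^c x|} = -(-1)^{|x|}$ unambiguously), or, more cleanly, use the paper's own Remark that the $\Z$-graded condition is equivalent to $d_+^2 = d_-^2 = 0$ and $d_+d_- + d_-d_+ = w\,\mathrm{Id}$, treat $d_+$ and $d_-$ as ordinary (homogeneous) tensor-product differentials, and compute only the anticommutator.
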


\begin{remark} 
Following Rasmussen \cite{Ras2}, we work with $\Z$-graded, rather than $\Z / 2\Z$-graded, matrix factorizations as in \cite{KR}. The $\Z$-grading implies that  $(d_+ + d_-)^2 = w Id_{C^*}$ is equivalent to 
$$d_+^2 = d_-^2 =0$$ 
and 
$$d_+d_- + d_-d_+ = w Id_{C^*}.$$ 
In the case that $w=0$, we acquire a new $\Z / 2\Z$-graded chain complex structure with differential $d_+ + d_-$.  Supressing the underlying ring $R$ and potential $w$, we will denote the category of graded matrix factorizations by $\mathit{mf}$.
\end{remark}

We also need the notion of complexes of matrix factorizations. If we visualize a collection of matrix factorizations as sitting horizontally in the plane at each integer level, with differentials $d_+$ and $d_-$ running right and left, respectively, we can think of morphisms $\{d_v\}$ between  these as running in the vertical direction. If $d_v^2 = 0$ we get a complex, i.e. all together we have that 
$$d_{\pm}:C^{i,j} \rightarrow C^{i \pm 1, j}, \ \ \ \  d_{v}:C^{i,j} \rightarrow C^{i, j + 1},$$
where we think of $i$ as the \emph{horizontal} grading and $j$ as the \emph{vertical} grading, and will denote these as $gr_h$ and $gr_v$, respectively. 

In addition we will be taking tensor products of complexes of matrix factorizations (in the obvious way) and, just to add to the confusion we will also have homotopies of these complexes as well homotopies of matrix factorizations themselves. These notions will land us in different categories to which we now give some notation.

\begin{itemize}
\item $\mathit{hmf}$ will denote the \emph{homotopy category of matrix factorizations}
\item $\mc{KOM}(\mathit{mf})$ the \emph{category of complexes of matrix factorizations}
\item $\mc{KOM}_h(\mathit{mf})$ \emph{homotopy category of complexes of matrix factorizations}
\item $\mc{KOM}_h(\mathit{hmf})$ the obvious conglomerate.
\end{itemize}

%
\subsection{Diagrammatics of Soergel bimodules}
\label{subsec-sb}
%

The category of Soergel bimodules $\mc{SC}_1$ is generated monoidally over $R$ by objects $B_i$,
$i \in I$, which satisfy

\begin{equation} \label{decomp1} B_i \TenR B_i \cong B_i\{1\} \oplus B_i\{-1\}  \end{equation}
\begin{equation} B_i \TenR B_j \cong B_j \TenR B_i \label{dc-ij} \textrm{ for distant } i, j \end{equation}
\begin{equation} B_i \TenR B_j \TenR B_i \oplus B_j \cong B_j \TenR B_i \TenR B_j \oplus
  B_i \label{dc-ipi} \textrm{ for adjacent } i, j. \end{equation}

(Technically speaking this should be called the category of Bott-Samuelson bimodules and the ``real" category of Soergel bimodules is gotten as described at the end of this section. See also \cite{EKh} and \cite{EK} for more details.)
The Grothendieck group of $\mc{SC}(I)$ is isomorphic to the Hecke algebra $\mc{H}$ of type  $A_\infty$ over the ring $\Z[t, t^{-1}]$, with the class of $B_i$ being
sent to a generator $b_i$ of $\mc{H}$, and the class of $R\{1\}$ being sent to $t$. 

More concretely, the Soergel bimodule $B_i = R \otimes_{R^i} R\{-1\}$, where $R$ is a graded polynomial ring, $\{m\}$ denotes the grading shift by $m$, and $R^i$ is the subring of invariants corresponding to the permutation $(i,i+1)$ under the natural action of $S_n$ on the variables. There is some flexibility as to the exact description of $R$, but in our case it will mainly be the ring $\Z[x_1-x_2, \dots , x_{n-1} - x_n]$ with $\deg{x_i} = 2$ (note that our grading shift of $-1$ in the definition of $B_i$ is absent from the contruction of \cite{K1}). We have that $B_{\emptyset} = R$ itself, and  $B_{\ii} = 
B_{i_1} \otimes B_{i_2} \otimes \dots \otimes B_{i_d}$ where $\ii$ is denotes the sequence $\{i_1, i_2, \dots , i_d\}$, i.e. 

$$B_{\ii} = (R \otimes_{R^{i_1}} R\{-1\}) \otimes (R \otimes_{R^{i_2}} R\{-1\}) \otimes \dots \otimes (R \otimes_{R^{i_d}} R\{-1\})$$
$$= R \otimes_{R^{i_1}} R \otimes R \otimes_{R^{i_2}} R \otimes \dots \otimes R \otimes_{R^{i_d}} R\{-d\}.$$

One useful feature of this categorification is that it is easy to calculate the dimension of Hom spaces in each degree. Let $\HOM(M,N) \define
\bigoplus_{m \in \Z} \Hom(M,N\{m\})$ be the graded vector space (actually an $R$-bimodule) generated by homogeneous morphisms of all degrees. Let
$B_{\ii} \define B_{i_1} \TenR \cdots \TenR B_{i_d}$. Then $\HOM(B_{\ii},B_{\jj})$ is a free left $R$-module, and its graded rank over $R$ is
given by $(b_{\ii},b_{\jj})$. For more information on this categorification and related topics we refer the reader to \cite{EKh}, and \cite{Soe1}.

The graphical counterpart, which we will also refer to as $\mc{SC}_1$ was given a diagrammatic presentation by generators and
relations, allowing morphisms to be viewed as isotopy classes of certain graphs.

An object in $\mc{SC}_1$ is given by a sequence of indices $\ii$, which is visualized as $d$ points
on the real line $\R$, labelled or ``colored'' by the indices in order from left to right. Sometimes
these objects are also called $B_{\ii}$.  Morphisms are given by pictures embedded in the strip $\R
\times [0,1]$ (modulo certain relations), constructed by gluing the following generators
horizontally and vertically:

\igc{1}{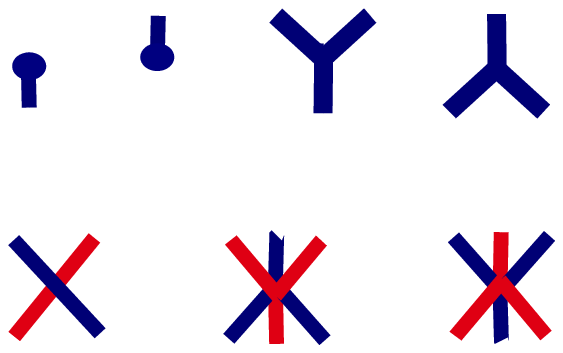}

For instance, if ``blue" corresponds to the index $i$ and ``red" to $j$, then the lower right generator is a morphism from $jij$ to $iji$. The generating
pictures above may exist in various colors, although there are some restrictions based on adjacency conditions.

We can view a morphism as an embedding of a planar graph, satisfying the following properties:
\begin{enumerate}
\item Edges of the graph are colored by indices from $1$ to $n$.
\item Edges may run into the boundary $\R \times \{0,1\}$, yielding two sequences of colored points
  on $\R$, the top boundary $\ii$ and the bottom boundary $\jj$.  In this case, the graph is viewed
  as a morphism from $\jj$ to $\ii$.
\item Only four types of vertices exist in this graph: univalent vertices or ``dots'', trivalent
  vertices with all three adjoining edges of the same color, 4-valent vertices whose adjoining edges
  alternate in colors between $i$ and $j$ distant, and 6-valent vertices whose adjoining
  edges alternate between $i$ and $j$ adjacent.
\end{enumerate}

The degree of a graph is +1 for each dot and -1 for each trivalent vertex. $4$-valent and $6$-valent vertices are of degree $0$. The term \emph{graph} henceforth refers to such a graph embedding.

By convention, we color the edges with different colors, but do not specify which colors match up with which $i \in I$. This is legitimate, as
only the various adjacency relations between colors are relevant for any relations or calculations. We will specify adjacency for all pictures,
although one can generally deduce it from the fact that 6-valent vertices only join adjacent colors, and 4-valent vertices join only distant colors.

In addition to the bimodules $B_{\ii}$ above, we will require the use of the bimodule $R \otimes_{R^{i, i+1}} R\{-3\}$, where $R^{i, i+1}$ is the ring of invariants under the transpositions $(i, i+1)$ and $(i+1, i+2)$, and will use a black squiggly line, as in equation \ref{decomp2d} below, to represent it. This bimodule comes into play in the isomorphisms
 
\begin{equation} \label{decomp2a}  B_i \otimes B_{i+1} \otimes B_i \cong B_i \oplus (R \otimes_{R^{i, i+1}} R\{-3\}) \end{equation}
and 
 
\begin{equation} \label{decomp2b}  B_{i+1} \otimes B_{i} \otimes B_{i+1} \cong B_{i+1} \oplus (R \otimes_{R^{i, i+1}} R\{-3\}), \end{equation}

\hspace{-8mm} which we will use in the proof of Reidemeister move III.
As usual in a diagrammatic category, composition of morphisms is given by vertical concatenation,
and the monoidal structure is given by horizontal concatenation.

We then allow $\Z$-linear sums of graphs, and apply relations  to obtain our category $\mc{SC}_1$. 
The reations come in three flavors: one color, two distant colors, two adjacent and one distant, and three mutually distant colors. We do not list all of them here, just the consequences necessary for the calculations at hand, and refer the reader to \cite{EKh} and \cite{KR} for a complete picture. Our graphs are invariant under isotopy and in addition we have the following isomorphisms or ``decompositions":

\begin{equation} \label{decomp1d}  \ig{.9}{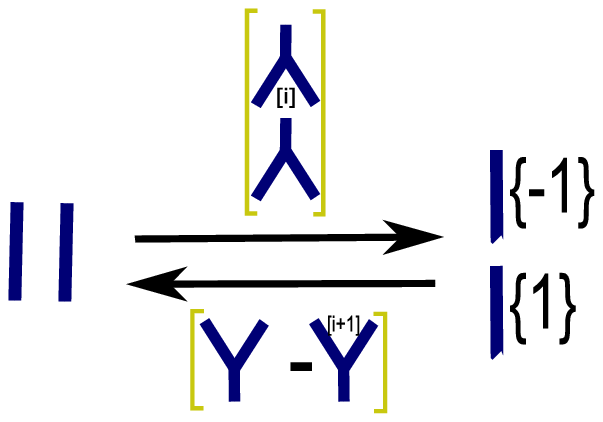}
 \end{equation}

Note that this relation is precisely that of \ref{decomp1} described diagrammatically.

\begin{equation} \label{decomp2d} \ig{.9}{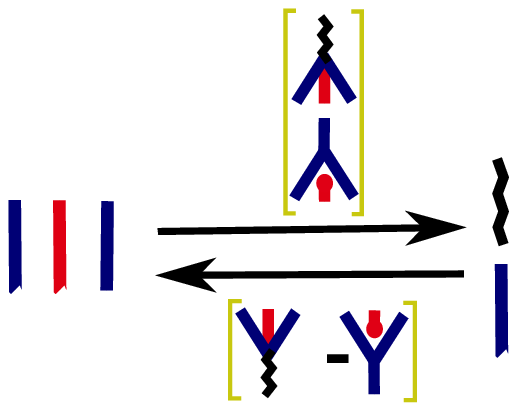} \end{equation}

Here we have the graphical counterpart of \ref{decomp2a} and \ref{decomp2b}.

\begin{remark} There is a functor from this graphical category to the category of $R$-bimodules, sending a
  line colored $i$ to $B_i$ and each generator to an appropriate bimodule map. The functor gives an
  equivalence of categories between this diagrammatic category and the subcategory
  $\mc{SC}_1$ of $R$-bimodules mentioned in the previous section, so the use of the same name is legitimate.

  Our diagrammatic category has many wonderful properties, such as the self-adjointness of $B_i$, which permits us to ``twist" morphisms around and view any morphism as one from or to the empty diagram. This allows for a very hands-on, explicit, understanding of hom-paces between objects in  $\mc{SC}_1$, which was key in proving functoriality in \cite{EK}. 
\end{remark}

Primarily we will work in another category denoted $\mc{SC}_2$, the category formally containing all direct sums and grading shifts of objects in $\mc{SC}_1$, but whose morphisms are forced to be degree 0. In addition, we let $\mc{SC}$ be the Karoubi envelope, or idempotent completion, of the category $\mc{SC}_2$. Recall that the Karoubi envelope of a category $\mc{C}$ has as objects pairs $(B,e)$ where $B$ is an object in $\mc{C}$ and $e$ an idempotent endomorphism of $B$.  This object acts as
though it were the ``image'' of this projection $e$, and in an additive category behaves like a
direct summand. For more information on Karoubi envelopes, see Wikipedia. It is really here that the object  $R \otimes_{R^{i, i+1}} R\{-3\}$ of \ref{decomp2a} and \ref{decomp2b} resides. In practice all our calculations will be done in  $\mc{SC}_2$, but since this includes fully faithfully into $\mc{SC}$ they will be valid there as well.

%
\subsection{Hochschild (co)homology}
\label{subsec-HH}
%

Let $A$ be a $\mathbbm{k}$ algebra and $M$ an $A$-$A$-bimodule, or equivalently a left  $A \otimes A^{op}$-module or a right $A^{op} \otimes A$-module. The definitions of the \emph{Hochschild (co)homology} groups $HH_*(A,M)$ ($HH^*(A,M)$) are the following:

\begin{equation}
HH_*(A,M) := Tor_*^{A \otimes A^{op}}(M,A) \ \ \ \ \ HH^*(A,M) := Ext^*_{A \otimes A^{op}}(A,M).
\label{HHdef}
\end{equation}

To compute this we take a projective resolution of the $A$-bimodule $A$, with the natural left and right action, by projective $A$-bimodules

$$ \dots \rightarrow P_2 \rightarrow P_1 \rightarrow P_0 \rightarrow 0,$$ 

\hspace{-8mm} and tensor this with $M$ over $A \otimes A^{op}$ to get

 $$ \dots \rightarrow P_2 \otimes_{A \otimes A^{op}} M \rightarrow P_1 \otimes_{A \otimes A^{op}} M \rightarrow P_0 \otimes_{A \otimes A^{op}} M \rightarrow 0.$$

The homology of this complex is isomorphic to $HH_*(A,M)$. 

\hspace{-6mm}\textbf{Example:} For any bimodule $M$, we have
$$HH_0(A,M) \cong M/[A,M] \ \ \ \ \ HH^0(A,M) \cong M^A,$$
where $[A,M]$ is the subspace of $M$ generated by all elements of the form $am-ma$,  $a \in A$ and $m \in M$, and $M^A = \{m \in M \ | \ am=ma$ for all a $\in A\}$. We leave this as an exercise or refer the reader to \cite{Ka}.
\\

If we take the polynomial algebra $A = \mathbbm{k}[x_1, \dots , x_n]$, with $\mathbbm{k}$ commutative, then we can use a much smaller, ``Koszul," resolution of $A$ by free  $A\otimes A$-modules. This is gotten by taking the tensor product of the following complexes 

\begin{displaymath}
\xymatrix{
0 \ar[r] & A \otimes A  \ar[rr]^{x_i \otimes 1 - 1 \otimes x_i}  & & A \otimes A  \ar[r] & 0,}   
\end{displaymath}

\hspace{-6mm}for $1 \geq i \geq n$. This resolution has length $n$, and its total space is naturally isomorphic to the exterior algebra on $n$ generators tensored with $A \otimes A$. Hence, we get that the Hochschild homology of a bimodule $M$ over $A$ is made up of $2^n$ copies of $M$, with the differentials coming from multiplication by $x_i \otimes 1 - 1 \otimes x_i$, i.e.

$$ 0 \rightarrow C_n(M) \rightarrow \dots \rightarrow C_1(M) \rightarrow C_0(M) \rightarrow 0,$$

with 

\begin{displaymath}
C_j(M) = \bigoplus_{I \subset \{1, \dots, n \}, |I|=j} M\otimes_{\Z}\Z[I],
\end{displaymath}

\hspace{-6mm}where $\Z[I]$ is the rank $1$ free abelian group generated by the symbol $[I]$ (i.e. it's there to keep track where exactly we are in the complex). Here, the differential takes the form 

$$d(m\otimes[I]) = \sum_{i\in I} \pm (x_im-mx_i) \otimes [I \backslash \{i\}],$$

\hspace{-6mm}and the sign is taken as negative if $I$ contains an odd number of elements less than $i$. 

\begin{remark}For the polynomial algebra, the Hochschild homology and cohomology are isomorphic,

$$HH_i(A,M) \cong HH^{n-i}(A,M),$$

\hspace{-7mm} for any bimodule $M$. This comes from self-duality of the Koszul resolution for such algebras. Hence, we will be free to use either homology or cohomology groups in the constructions below. 
\end{remark}
\\

For us, taking Hochschild homology will come into play when looking at closed braid diagrams. To a given resolution of a braid diagram we will assign a Soergel bimodule;  ``closing off" this diagram will correspond to taking Hochschild homology of the associated bimodule. More details of this below in section \ref{subsec-sbconstruction}.

\section{The integral HOMFLY-PT complex}
\label{sec-complex}

%
\subsection{The matrix factorization construction}
\label{subsec-mfconstruction}
%

As stated above we will work with $\Z$-graded, rather than ${\Z} / {2\Z}$-graded, matrix factorizations and follow closely the conventions laid out in \cite{Ras2}. We begin by first assigning the appropriate complex to a single crossing and then extend this to general braids.\\

\textbf{Gradings:} 
Our complex will be triply graded, coming from the internal or ``quantum" grading of the underlying ring, the homological grading of the matrix factorizations, and finally an overall homological grading of the entire complex. It will be convenient to visualize our complexes in the plane with the latter two homological gradings lying in the horizontal and vertical directions, respectively. We will denoted these  gradings by $(i, j, k) = ( q, 2gr_h, 2gr_v)$ and their shifts by curly brackets, i.e. $\{a, b, c\}$ will indicate a shift in the quantum grading by $a$, in the horizontal grading by $b$, and in the vertical grading by $c$. Note that following the conventions in  \cite{Ras2} we have doubled the latter two gradings.   

\begin{figure} [!htbp]
\centerline{
\includegraphics[scale=.7]{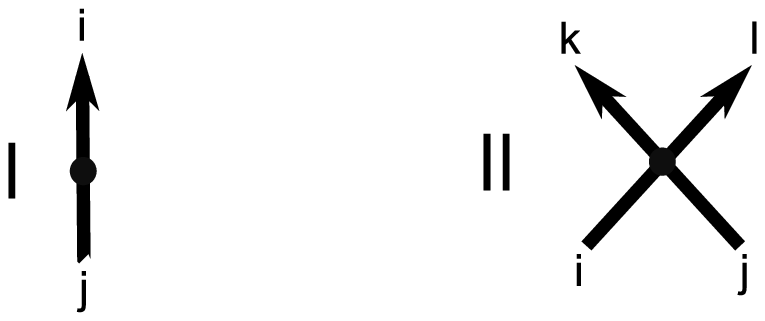}}
\caption{} \label{vertrel}
\end{figure}

\begin{defn}\textbf{\{Edge ring\}} Given a diagram $D$ with vertices labelled by $x_1, \dots , x_n$, define the \emph{edge ring} of D as $R(D) := \Z [x_1, \dots , x_n]/<rel(v_i)>$, where $i$ runs over all internal vertices, or marks, with the defining relations being $x_i - x_j$ for type I and $x_k + x_l - x_i - x_j$ for type II vertices (see figure \ref{vertrel}).
\end{defn}
 Consider the two types of crossings $D_+$ and $D_-$, as in figure \ref{resolutions}, with outgoing edges labeled by $k, l$, and incoming edges labelled by $i, j$ . Let
$$R_c := \Z[x_i, x_j, x_k, x_l]/(x_k + x_l - x_i - x_j) \cong \Z[x_i, x_j, x_k]$$

\hspace{-8mm} be the underlying ring associated to a crossing. To the positive crossing $D_+$ we assign the following complex:

\begin{displaymath}
\xymatrix{
R_c \{0, -2, 0\}  \ar[rrr]^{(x_k - x_i)}    & & & R_c \{0, 0, 0\}   \\
R_c \{2, -2, -2\}   \ar[rrr]^{-(x_k - x_i)(x_k - x_j)}  \ar[u]^{(x_j - x_k)}  & & & R_c \{0, 0, -2\} \ar[u]^{1} }
\end{displaymath}

To the negative crossing $D_-$ we assign the following complex:

\begin{displaymath}
\xymatrix{
R_c \{0, -2, 2\}  \ar[rrr]^{-(x_k - x_i)(x_k - x_j)}    & & & R_c \{-2, 0, 2\}   \\
R_c \{0, -2, 0\}   \ar[rrr]^{(x_k - x_i)}  \ar[u]^{1}  & & & R_c \{0, 0, 0\} \ar[u]^{(x_j - x_k)} }
\end{displaymath}

\textbf{A few useful things to note:} The horizontal and vertical differentials $d_+$ and $d_v$ are homogeneous of degrees $(2,2,0)$ and $(0,0,2)$, respectively. For those more familiar with \cite{KR} and hoping to reconcile the differences, note that in $R_c$ multiplication by  $x_k x_l- x_i x_j = -(x_k - x_i)(x_k - x_j)$, so up to some grading shifts we are really working with the same underlying complex as in the original construction, but of course now over $\Z$, not $\Q$.

To write down the complex for a general braid we tensor the above for every crossing, keeping track of markings, and then replace the underlying ring with a copy of the edge ring $R(D)$. More precisely, given a diagram $D$ of a braid let
$$ C(D) := \bigotimes_{crossings} (C(D_c) \otimes_{R_c} R(D)).$$

\begin{defn} \textbf{\{HOMFLY-PT homology \}}Given a braid diagram $D$ of a link $L$ we define its HOMFLY-PT homology to be the group
$$H(L):= H(H(C(D), d_+), d_v ^*)\{-w+b, w+b-1, w-b+1\},$$
where $w$ and $b$ are the writhe and the number of strands of $D$, respectively. 
\end{defn}

\begin{remark} In \cite{Ras2}, this is what J. Rasmussen calls the ``middle HOMFLY homology." 
The relation between this link homology theory and the HOMFLY-PT polynomial is that for any link $L \subset S^3$ 
$$\sum_{i,j,k} (-1)^{(k-j)/2} a^j q^i dim H^{i,j,k}(L) = \frac{-P(L)}{q-q^{-1}}.$$
\end{remark}\\

\textbf{The reduced complex:} There is a natural subcomplex $\olC(D) \subset C(D)$ defined as follows: let $\olR (D) \subset R(D)$ to be the subring generated by $x_i - x_j$ where $i, j$ run over all edges of $D$ and let $\olC(D)$ be the subcomplex gotten by replacing in $C(D)$ each copy of $R(D)$ by one of  $\olR (D)$. A quick glance at the complexes $C(D_+)$ and $C(D_-)$ will reassure the reader that this is indeed a subcomplex, as the coefficients of both $d_v$ and $d_+$ lie in $\olR(D)$. We will refer to $\olC(D)$ as the \emph{reduced complex} for D.

\begin{itemize}
\item  If $i$ is an edge of $D$ we can also define the complex $\olC(D,i) := C(D)/(x_i)$. It is not hard to see that
$\olC(D,i) \cong \olC(D)$ and is, hence, independent of the choice of edge $i$. See \cite{Ras2} section 2.8 for a discussion as well as \cite{KR}. 
\end{itemize}

Below we will work primarily with the reduced complex $\olC(D)$, and will stick with the grading conventions of \cite{Ras2}, which are different than that of \cite{KR}. 

\begin{defn} \textbf{\{reduced homology\}}Given a braid diagram $D$ of a link $L$ we define its reduced HOMFLY-PT homology to be the group $$\olH(L):= H(H(\olC(D), d_+), d_v ^*)\{-w+b-1, w+b-1, w-b+1\},$$
where $w$ and $b$ are the writhe and the number of strands of $D$, respectively. 
\end{defn}

\begin{remark} For any link $L \subset S^3$ we have 
$$\sum_{i,j,k} (-1)^{(k-j)/2} a^j q^i dim \olH^{i,j,k}(L) = P(L).$$  
We can look at the complex $C(D)$ in two essential ways: either as the tensor product, over appropriate rings, of $C(D_+)$ and $C(D_-)$ for every crossing in our diagram $D$ (as described above), or as a tensor product of corresponding complexes over all resolutions of the diagram. Although this is really just a matter of point of view, the latter approach is what we find in the original construction of Khovanov and Rozansky, as well as in the Soergel bimodule construction to be described below. To clarify this approach, consider the oriented $D_o$ and singular $D_s$ resolution of a crossing as in diagram \ref{resolutions}. Assign to $D_o$ the complex

\begin{displaymath}
\xymatrix{
0 \ar[r] & R_c  \ar[rr]^{(x_k - x_i)}  & & R_c \ar[r] & 0}   
\end{displaymath}  

and to $D_s$ the complex

\begin{displaymath}
\xymatrix{
0 \ar[r] & R_c   \ar[rrr]^{-(x_k - x_i)(x_k - x_j)}   & & & R_c  \ar[r] & 0.}
\end{displaymath} 

Then we have 
$$C(D_+): \ \ 0 \rightarrow C(D_s) \longrightarrow C(D_o) \rightarrow 0,$$
 
$$C(D_-): \ \ 0 \rightarrow C(D_o) \longrightarrow C(D_s) \rightarrow 0,$$
where the maps are given by $d_v$ as defined above.
[For simplicity we leave out the internal grading shifts.] Let a resolution of a link diagram $D$ be a resolution of each crossing in either of the two ways above, and let the complex assigned to each resolution be the tensor product of the corresponding complexes for each resolved crossing. Then, modulo grading shifts, our total complex can be viewed as 
$$C(D) = \bigoplus_{resolutions} C(D_{res}),$$
where $D_{res}$ is the diagram of a given resolution. This closely mimics the ``state-sum model" for the Jones polynomial, due to Kauffman \cite{Kau}, or the MOY calculus of \cite{MOY} for other quantum polynomials. 
\end{remark}

%
\subsection{The Soergel bimodule construction}
\label{subsec-sbconstruction}
%

We now turn to the Soergel bimodule construction for the HOMLFY-PT homology of \cite{K1}. Recall from section \ref{subsec-sb} that the Soergel bimodule $B_i = R \otimes_{R^i} R\{-1\}$ where $R = \Z[x_1 - x_2, \dots , x_{n-1} - x_n]$ is the ring generated by consecutive differences in variables $x_1, \dots, x_n$ ($n$ is the number of strands in the braid diagram), and $R^i \subset R$ is the subring of $S_2$-invariants corresponding to the permutation action $x_i \leftrightarrow x_{i+1}$. Furthermore define the map $B_i \rightarrow R$ by  $1\otimes1 \longmapsto 1,$  and the map $R \rightarrow B_i$ by $1 \longmapsto (x_i - x_{i+1})\otimes 1 +1 \otimes (x_i - x_{i+1})$. We resolve a crossing in position $[i, i+1]$  in the either of the two ways, as in figure \ref{resolutions}, assigning $R$ to the oriented resolution and $B_i$ to the singular resolution. For a positive crossing we have the complex
$$C(D_+): \ \ 0 \rightarrow R \{2\} \longrightarrow B_i \{1\} \rightarrow 0,$$
and for a negative crossing the complex
$$C(D_-): \ \ 0 \rightarrow B_i \{-1\} \longrightarrow R \{-2\} \rightarrow 0.$$
We place $B_i$ in homological grading $0$ and increase/decrease by $1$, i.e. in the complex for $D_+$, $R\{2\}$ is in homological grading $-1$. Note, this grading convention differs from \cite{K1}, and is the convention used in \cite{EK}. The complexes above are known as Rouquier complexes, due to R. Rouquier who studied braid group actions with relation to the category of Soergel bimodules; for more information we refer the reader to \cite{EK}, \cite{K1}, and \cite{Rou1}.

\begin{figure} [!htbp]
\centerline{
\includegraphics[scale=.6]{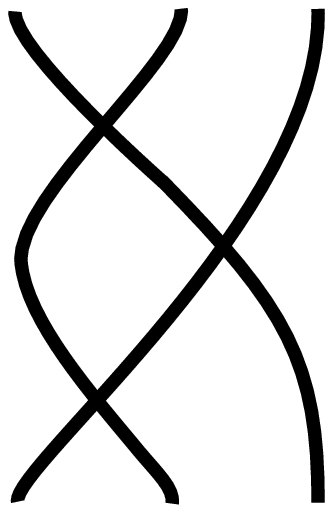}}
\label{exres}
\caption{}
\end{figure}

Given a braid diagram $D$ we tensor the above complexes for each crossing, arriving at a total complex of length $k$, where $k$ is the number of crossings of $D$, or equivalently the length of the corresponding braid word. Each entry in the complex can be thought of as a resolution of the diagram consisting of the tensor product of the appropriate Soergel bimodules. For example, to the graph in \ref{exres} we assign the bimodule $B_1 \otimes B_2 \otimes B_1$. That is,  modulo grading shifts, we can view our total complex as 
$$C(D) = \bigoplus_{resolutions} C(D_{res}).$$
To proceed, we take Hochschild homology $HH(C(D_{res}))$ for each resolution of $D$ and arrive at the complex
$$HH(C(D)) = \bigoplus_{resolutions} HH(C(D_{res})),$$
with the induced differentials. Finally, taking homology of $HH(C(D))$ with respect to these differentials gives us our link homology.

\begin{defn} \textbf{\{reduced homology\}} Given a braid diagram $D$ of a link $L$ we define its reduced HOMFLY-PT homology to be the group $$H(HH(C(D))).$$ 
\end{defn} 

Of course, now that we have defined reduced HOMFLY-PT homology in two different ways, it would be nice to reconcile the fact that they are indeed the same.

\begin{claim}
\label{claim-equivalence}
Up to grading shifts the two definitions of reduced HOMFLY-PT homology agree, i.e. $H(H(\olC(D), d_+), d_v ^*) \cong H(HH(C(D)))$ for a diagram $D$ of a link $L$.  

\end{claim}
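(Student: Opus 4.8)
The plan is to compare the two constructions resolution-by-resolution, so that the claim reduces to a statement about a single resolution of a braid diagram, where both sides are obtained from the same underlying combinatorial data. First I would recall that both complexes carry a decomposition indexed by the resolutions of $D$: on the matrix factorization side, modulo grading shifts, $\olC(D) = \bigoplus_{res} \olC(D_{res})$, and on the Soergel side $HH(C(D)) = \bigoplus_{res} HH(C(D_{res}))$, with the differentials between resolutions induced in both cases by the same maps $d_v$. So if I can identify $H(\olC(D_{res}), d_+)$ with $HH(C(D_{res}))$ compatibly with the differentials $d_v^*$ connecting different resolutions, then the homology spectral sequences (or just the two-step homology computations) will agree and the claim follows.

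The heart of the matter is therefore a local computation. For a resolution $D_{res}$, the Soergel bimodule assigned to it is a tensor product $B_{\ii}$ (tensored with $R$'s for oriented resolutions), i.e. a tensor product of bimodules $R \otimes_{R^i} R$ over various $i$; its Hochschild homology is computed by the Koszul resolution of $R$ over $R \otimes R$, giving $2^{n}$ copies of $B_{\ii}$ with Koszul-type differentials coming from multiplication by $x_a \otimes 1 - 1 \otimes x_a$. On the other side, $\olC(D_{res})$ with the differential $d_+$ is exactly the Koszul complex built from the factors $(x_k - x_i)$ attached to singular resolutions, living over the edge ring $\olR(D_{res})$. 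The key point is that for each singular resolution the tensor factor $B_i = R \otimes_{R^i} R$ has a free Koszul resolution over $R \otimes_{R^{i}} R$-modules, and taking Hochschild homology amounts to imposing exactly the relation $x_i = x_{i+1}$ on one side of the bimodule — which is precisely the content of quotienting by the differential $(x_k - x_i)$ in the edge ring picture, since the edge ring relations $x_i = x_j$ (type I) and $x_k + x_l = x_i + x_j$ (type II) encode how strands are glued. I would make this precise by writing both as Koszul complexes over a polynomial ring and exhibiting an explicit isomorphism of complexes, matching the exterior-algebra generators $[I]$ on the Hochschild side with the homological directions $gr_h$ on the matrix factorization side, and checking that the surviving differentials — those inducing $d_v^*$ between resolutions — are carried to one another.

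Concretely, the steps in order: (1) set up the resolution decomposition on both sides and reduce to a single resolution; (2) for a singular resolution, recall $B_i = R\otimes_{R^i}R\{-1\}$ and write its $R\otimes R$-Koszul resolution, showing $HH(B_i) \cong \olR$-Koszul complex of $(x_i - x_{i+1})$, i.e. match it with the $d_+$-complex $\olC(D_s)$; (3) extend multiplicatively over all crossings, using that both Hochschild homology and the tensor product of matrix factorizations are monoidal and that the Koszul resolution of $R = \Z[x_1-x_2,\dots]$ is the tensor product of length-one Koszul complexes; (4) identify the edge ring $\olR(D)$ as the common ground ring appearing after taking $d_+$-homology on one side and Hochschild homology on the other; (5) check compatibility of the induced $d_v^*$ differentials and conclude by taking the remaining homology. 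I expect step (2)–(3) — pinning down the precise grading shifts and signs so that the identification is an isomorphism of complexes and not merely of graded groups, and verifying that the Hochschild differential corresponds to $d_+$ rather than getting entangled with $d_v$ — to be the main obstacle; the conventions of \cite{Ras2} versus \cite{K1} differ (as the text repeatedly warns), so bookkeeping the triple grading $(i,j,k) = (q, 2gr_h, 2gr_v)$ against the Hochschild degree will require care, though it is ultimately routine once the right dictionary is fixed.
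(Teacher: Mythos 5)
Your high-level strategy---decompose both sides by resolutions and reduce to a local comparison of Koszul complexes---is the same as the paper's, but the local identification you propose in steps (2)--(3) is incorrect, and this is where the real content of the claim lives.

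You assert that $HH(B_i)$ should be matched with the $d_+$-complex $\olC(D_s)$, treating the factor $(x_k-x_i)$ ``attached to singular resolutions'' as encoding the Hochschild differential. This conflates two different things. First, a bookkeeping point: the Koszul factor attached to a singular crossing $D_s$ is $-(x_k-x_i)(x_k-x_j)=x_ix_j-x_kx_l$, not the linear factor $(x_k-x_i)$ (which is the factor for an \emph{oriented} resolution $D_o$). Second, and more seriously, $H(C(D_s),d_+)$ does not compute $HH(B_i)$---it computes the Soergel bimodule $B'_i$ itself. Lifting to $\Z[x_i,x_j,x_k,x_l]$, the complex $C(D_s)$ is the Koszul complex of the regular sequence $(x_k+x_l-x_i-x_j,\,x_kx_l-x_ix_j)$, whose cohomology sits in top degree and is the quotient ring $\Z[x_i,x_j,x_k,x_l]/(x_i+x_j-x_k-x_l,\,x_kx_l-x_ix_j)\cong B'_i$. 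No Hochschild homology has been taken yet at this stage.

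The Hochschild direction enters from an entirely different source that your proposal does not account for: the ``closing-off'' factors $x_a^{\rm top}-x_a^{\rm bottom}$ that identify the top and bottom marks when the braid is closed to a link diagram. These are the matrix-factorization counterpart of the Koszul factors $x_a\otimes 1-1\otimes x_a$ in the definition of $HH_*$, and there are $n$ (or $n-1$ in the reduced picture) of them---one per strand---not one per singular crossing. Your proposed identification $HH(B_i)\cong$ ``$\olR$-Koszul complex of $(x_i-x_{i+1})$'' is therefore also dimensionally off: the Hochschild Koszul complex for an $R$-bimodule has one exterior generator for each variable of $R$, regardless of which $B_i$'s appear. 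The correct argument, as in \cite{K1}, is a two-stage one: first quotient the full Koszul complex for $C(D_{res})$ by the regular sequence coming from all singular vertices (plus the linear factors from oriented resolutions), producing the Soergel bimodule $B'(D_{res})$ concentrated in a single horizontal degree; then observe that the remaining Koszul factors are precisely the closing-off elements, and the resulting Koszul complex over $B'(D_{res})$ is by definition the complex computing $HH_*(B'(D_{res}))$. Finally pass from $B'$ to $B$ and from $C$ to $\olC$. Without separating these two stages, the comparison of $H(\olC(D_{res}),d_+)$ with $HH(C(D_{res}))$ cannot go through.
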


\begin{proof}
The proof in \cite{K1} works without any changes for matrix factorizations and Soergel bimodules over $\Z$. We sketch it here for completeness and the fact that we will be referring to some of its details a bit later.  Lets first look at the matrix factorization $C(D_s)$ (unreduced version) associated to a singular resolution $D_s$.  Now $C(D_s)$ can be though of as a Koszul complex of the sequence $(x_k + x_l - x_ i- x_j, x_k x_l - x_i x_j)$ in the polynomial ring $\Z[x_i, x_j, x_k, x_l]$ (don't forget that in $R_c$ multiplication by  $x_k x_l- x_i x_j = -(x_k - x_i)(x_k - x_j)$). Now this sequence is regular and the complex has cohomology in the right-most degree. The cohomology is the quotient ring 
$$\Z[x_i, x_j, x_k, x_l]/(x_i + x_j - x_k - x_l, x_k x_l - x_i x_j).$$
This is naturally isomorphic to the Soergel bimodule $B'_i$ (notice that this is the ``unreduced" Soergel bimodule) over the polynomial ring $\Z[x_i, x_j]$. The left and right action of $R'$ on $B'_i$ corresponds to multiplication by $x_i, x_j$ and $x_k, x_l$, respectively. Quotienning out by $x_k + x_l - x_ i- x_j$ and $x_k x_l - x_i x_j$ agrees with the definition of $B'_i$ as the tensor product $R' \otimes_{R'_i} R'$ over the subalgebra $R'$ of symmetric polynomials in $x_1, x_2$.  

Now lets consider a general resolution $D_{res}$. The matrix factorization for $D_{res}$ is, once again, just a Koszul complex corresponding to a sequence of two types of elements. The first ones are as above, i.e. they are of the form $x_k + x_l - x_ i- x_j$ and $x_k x_l - x_i x_j$ and come from the singular resolutions $D_s$, and the remaining are of the form $x_i - x_j$ that come from ``closing off" our braid diagram $D$, which in turn means equating the corresponding marks at the top and bottom the diagram. Now it is pretty easy to see that the polynomials of the first type, coming from the $D_s$'s form a regular sequence and we can quotient out by them immediately, just like above. The quotient ring we get is naturally isomorphic to the Soergel bimodule $B'(D_{res})$ associated to the resolution $D_{res}$. At this point all we have left is to deal with the remaining elements of the form $x_i - x_j$ coming from closing off $D$; to be more concrete, the Koszul complex we started with for $D_{res}$ is quasi-isomorphic to the Koszul complex of the ring  $B'(D_{res})$ corresponding to these remaining elements. This in turn precisely computes the Hochschild homology of $B'(D_{res})$. 

Finally if we downsize from $B'_i$ to $B_i$ and from $C(D_{res})$ to $\olC(D_{res})$ we get the required isomorphism. For more details we refer the reader to \cite{K1}.
\end{proof}

\textbf{Gradings et all:} We come to the usual rigmarole of grading conventions, which seems to be evepresent in link homology. Perhaps when using the Rouquier complexes above we could have picked conventions that more closely matched those of \ref{subsec-mfconstruction}. However, we chose not to for a couple of reasons: first there would inevitably be some grading conversion to be done either way due to the inherent difference in the nature of the constructions, and second we use Rouquier complexes to aid us in just a few results (namely the proof of Reidemeister moves II and III), and leave them shortly after attaining these; hence, it is convenient for us, as well as for the reader familiar with the Soergel bimodule construction of \cite{K1} and the diagrammatic construction of \cite{EKh}, to adhere to the conventions of the former and the subsequent results in \cite{EK}. For completeness, we descibe the conversion rules. Recall that in the matrix factorization construction of \ref{subsec-mfconstruction} we denoted the gradings as $(i, j, k) = ( q, 2gr_h, 2gr_v)$.

\begin{itemize}
\item To get the cohomological grading in the Soergel construction take $(j-k)/2$ from \ref{subsec-mfconstruction}.
\item The Hochschild here matches the ``horizontal" or $j$ grading of \ref{subsec-mfconstruction}. 
\item To get the ``quantum" grading $i$ of \ref{subsec-mfconstruction} of an element $x$, take Hochschild grading of $x$ minus $\deg(x)$, i.e. $\deg(x) = j(x) - i(x)$. 
\end{itemize}

%
\subsubsection{Diagrammatic Rouquier complexes}
\label{subsec-diagrouquier}
%

We now restate the last section in the diagrammatic landuage of \cite{EK} as outlined above in \ref{subsec-sb}. The main advantage of doing this is the inherent ability of the graphical calculus developed by Elias and Khovanov in \cite{EKh} to hide and, hence simplify, the complexity of the calculations at hand. Recall that we work in the integral version of Soergel category $\mc{SC}_2$ as defined in section 2.3 of \cite{EK}, which allows for constructions over $\Z$ without adjoining inverses (see section 5.2 in \cite{EK} for a discussion of these facts).  Recall, that an object of  $\mc{SC}_2$ is given by a sequence of indices $\ii$, visualized as $d$ points on the real line and morhisms are given by pictures or graphs embedded in the strip  $\R \times [0,1]$. We think of the indices as ``colors," and depict them accordingly. The Soergel bimodule $B_i$ is represented by a vertical line of ``color" $i$ (i.e. by the identity morphism from $B_i$ to itself) and the maps we find in the Rouquier complexes above, section  \ref{subsec-sbconstruction}, are given by those referred to as ``start-dot" and ``end-dot." More precisely, the complexes $C(D_-)$ and $C(D_+)$ become

\begin{figure} [!htbp]
\centerline{
\includegraphics[scale=.6]{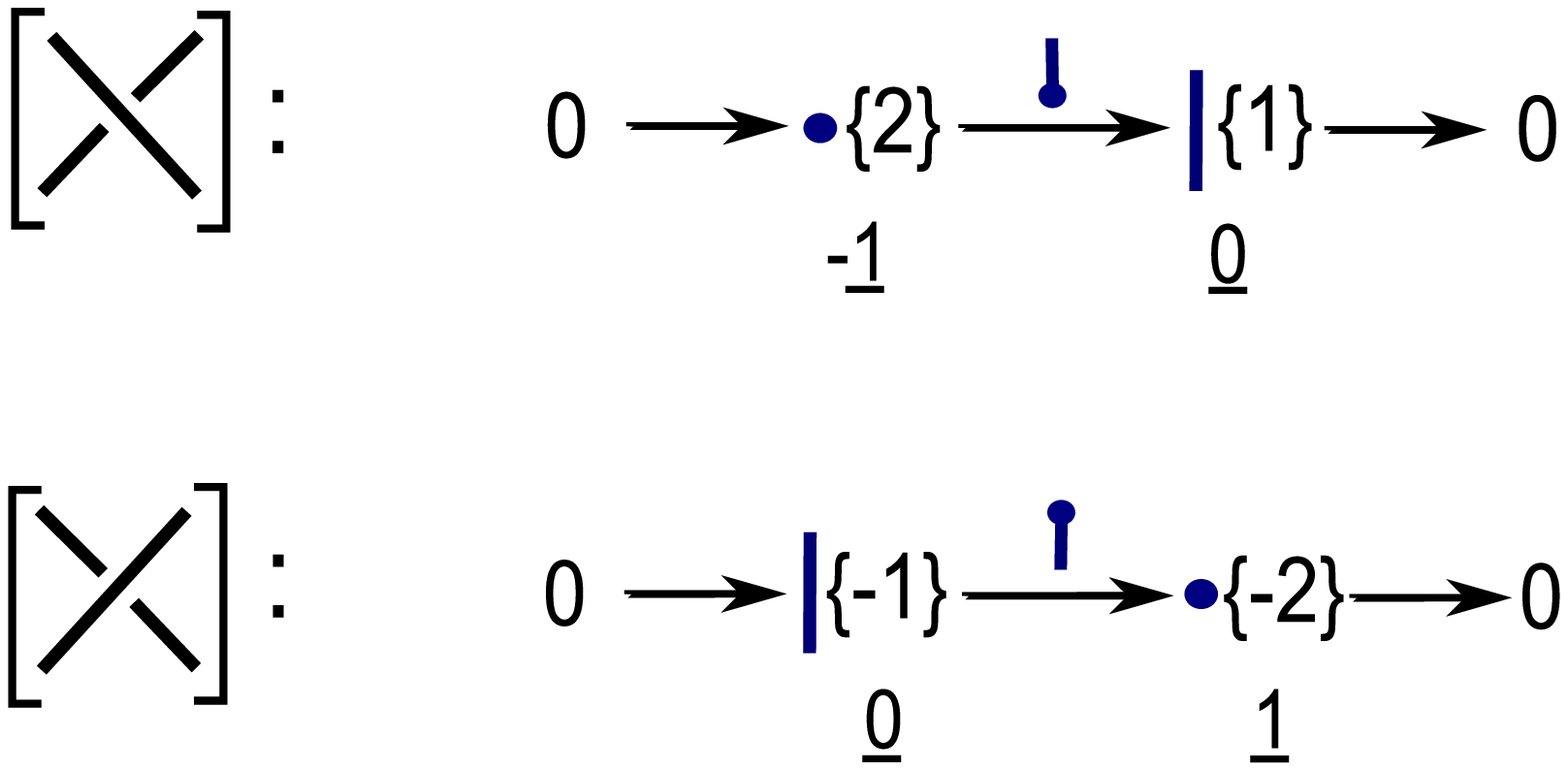}}
\caption{Diagrammatic Rouquier complex for right and left crossings} \label{crossingsdef}
\end{figure}

For completeness and ease we remind the reader of the diagrammatic calculus rubric used to contruct Rouquier complexes for a given braid diagram.

%
\subsubsection{Conventions}
\label{subsec-conventions}
%

We use a colored circle to indicate the empty graph, but maintain the color for reasons of sanity.
It is immediately clear that in the complex associated to a tensor product of $d$ Rouquier complexes, each
summand will be a sequence of $k$ lines where $0 \leq k \leq d$ (interspersed with colored
circles, but these represent the empty graph so could be ignored). Each differential from one
summand to another will be a ``dot'' map, with an appropriate sign.

\begin{enumerate}

 \item The dot would be a map of degree 1 if $B_i$ had not been shifted accordingly. In $\mc{SC}_2$, all maps must be homogeneous, so we could have deduced
the degree shift in $B_i$ from the degree of the differential. Because of this, it is not useful to keep track of various degree shifts of objects in a
complex. Hence at times we will draw all the objects without degree shifts, and all differentials will therefore be maps of graded degree 1 (as well as homological degree
1). It follows from this that homotopies will have degree -1, in order to be degree 0 when the shifts are put back in. One could put in the degree shifts
later, noting that $B_{\emptyset}$ always occurs as a summand in a tensor product exactly once, with degree shift 0.

\item We will use blue for the index associated to the leftmost crossing in the braid, then red and
  dotted orange for other crossings, from left to right. The adjacency of these various colors is
  determined from the braid.

\item We read tensor products in a braid diagram from bottom to top.  That is, in the following
  diagram, we take the complex for the blue crossing, and tensor by the complex for the red
  crossing. Then we translate this into pictures by saying that tensors go from left to right. In
  other words, in the complex associated to this braid, blue always appears to the left of red.

\vspace{3mm}

\begin{figure} [!htbp]
\centerline{
\includegraphics[scale=1.2]{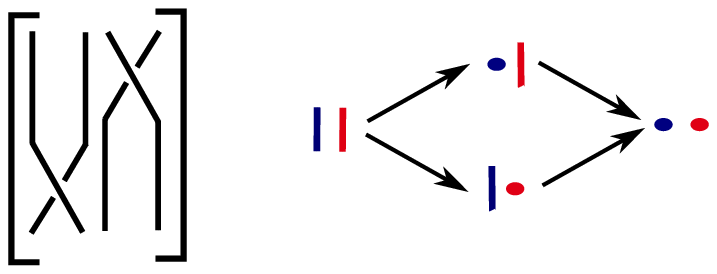}}
\label{example1}
\end{figure}\

\vspace{-5mm}

\item One can deduce the sign of a differential between two summands using the Liebnitz rule,
  $d(ab)=d(a)b + (-1)^{|a|}ad(b)$.  In particular, since a line always occurs in the basic complex
  in homological dimension $\pm 1$, the sign on a particular differential is exactly given by the
  parity of lines appearing to the left of the map.  For example,

\vspace{3mm}

\begin{figure} [!htbp]
\centerline{
\includegraphics[scale=.9]{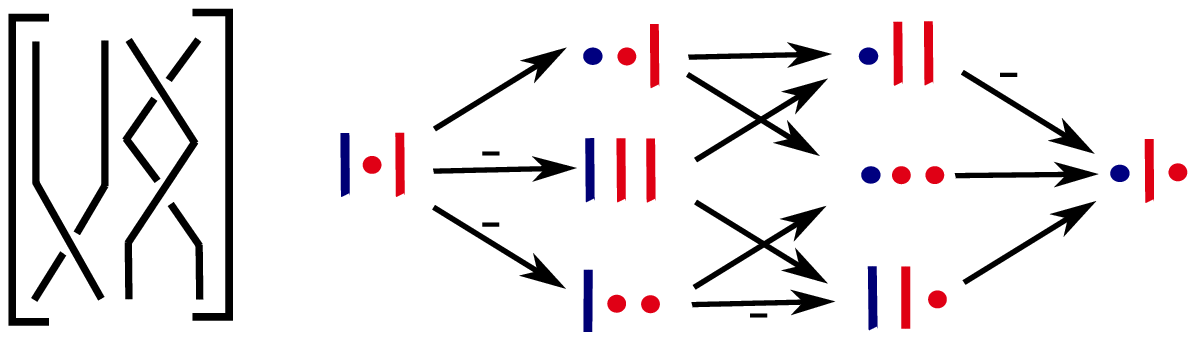}}
\label{example2}
\end{figure}\

\vspace{-5mm}

\item When putting an order on the summands in the tensored complex, we use the following
  standardized order.  Draw the picture for the object of smallest homological degree, which we draw
  with lines and circles.  In the next homological degree, the first summand has the first color
  switched (from line to circle, or circle to line), the second has the second color switched, and
  so forth.  In the next homological degree, two colors will be switched, and we use the
  lexicographic order: 1st and 2nd, then 1st and 3rd, then 1st and 4th... then 2nd and 3rd,
  etc. This pattern continues.

\vspace{3mm}

\begin{figure} [!htbp]
\centerline{
\includegraphics[scale=1.1]{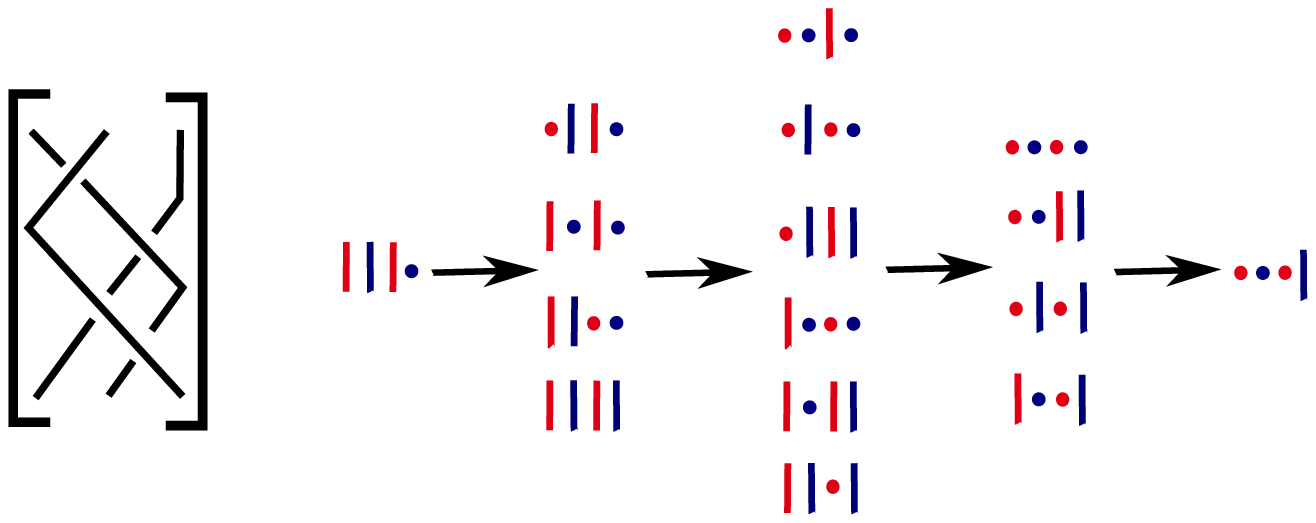}}
\label{example3}
\end{figure}\

\end{enumerate}

\section{Checking the Reidemeister moves}
\label{sec-rmoves}

We will use the matrix factorization construction of section \ref{subsec-mfconstruction} to check Reidemeister move I, as it is not very difficult to verify even over $\Z$ that this goes through, and the diagrammatic calculus of section \ref{subsec-diagrouquier} for the remaining moves. There are two main reasons for the interplay: first, checking Reidemeister II and III over $\Z$ using the matix factorization approach is rather computationally intensive (it was already quite so over $\Q$ in \cite{KR} with all the algebraic advantages of working over a field at hand); second, at this moment there does not exist a full diagrammatic description of Hochschild homology of Soergel bimodules, which prevents us from using a pictorial calculus to compute link homology from closed braid diagrams.  Of course, for Reidemeister II and III we could have used the computations of \cite{EK}, where we prove the stronger result that Rouquier complexes are functorial over braid cobordisms, but the proofs we exhibit below use essentially the same strategy as the original paper \cite{KR}, but are so much simpler and more concise that they underline well the usefulness of the diagrammatic calculus for computations. With that said, we digress...

A small lemma from linear algebra, which Bar-Natan refers to as ``Gaussian Elimination for Complexes" in \cite{BN2}, will be very helpfup to us.

\begin{lemma}
\label{lemma-GE}
  If $\phi:B \rightarrow D$ is an isomorphism (in some additive category $\cal C$), then the four term complex 
segment below

\begin{equation}
  \xymatrix@C=2cm{
    \cdots\
    \left[A\right]
    \ar[r]^{\begin{pmatrix}\alpha \\ \beta\end{pmatrix}} &
    {\begin{bmatrix}B \\ C\end{bmatrix}}
    \ar[r]^{\begin{pmatrix}
      \phi & \delta \\ \gamma & \epsilon
    \end{pmatrix}} &
    {\begin{bmatrix}D \\ E\end{bmatrix}}
    \ar[r]^{\begin{pmatrix} \mu & \nu \end{pmatrix}} &
    \left[F\right] \  \cdots
  }
\end{equation}
is isomorphic to the (direct sum) complex segment
\begin{equation}
  \xymatrix@C=3cm{
    \cdots\
    \left[A\right]
    \ar[r]^{\begin{pmatrix}0 \\ \beta\end{pmatrix}} &
    {\begin{bmatrix}B \\ C\end{bmatrix}}
    \ar[r]^{\begin{pmatrix}
      \phi & 0 \\ 0 & \epsilon-\gamma\phi^{-1}\delta
    \end{pmatrix}} &
    {\begin{bmatrix}D \\ E\end{bmatrix}}
    \ar[r]^{\begin{pmatrix} 0 & \nu \end{pmatrix}} &
    \left[F\right] \  \cdots
  }.
\end{equation}
Both of these complexes are homotopy equivalent to the (simpler)
complex segment
\begin{equation}
  \xymatrix@C=3cm{
    \cdots\
    \left[A \right]
    \ar[r]^{\left(\beta\right)} &
    {\left[C \right]}
    \ar[r]^{\left(\epsilon-\gamma\phi^{-1}\delta\right)} &
    {\left[E\right]}
    \ar[r]^{\left(\nu\right)} &
    \left[F\right] \  \cdots
  }.
\end{equation}
Here the capital letters are arbitrary columns of objects in $\cal C$ and all
Greek letters are arbitrary matrices representing morphisms with the
appropriate dimensions, domains and ranges (all the matrices are
block matrices); $\phi : B \rightarrow D$ is an isomorphism, i.e. it
is invertible.
\end{lemma}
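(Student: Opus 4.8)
The plan is to verify the two isomorphisms and the homotopy equivalence directly by writing down the relevant change-of-basis maps and contractions, since everything is elementary linear algebra performed internally to the additive category $\mathcal{C}$ (so no elements are needed — only block-matrix manipulations and the invertibility of $\phi$).

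First I would establish the isomorphism between complex (1) and complex (2). The idea is to conjugate the middle part of the complex by the block-upper/lower-triangular automorphisms that ``clear out'' $\delta$ and $\gamma$ against the invertible corner $\phi$. Concretely, apply the automorphism $\begin{pmatrix} 1 & 0 \\ -\gamma\phi^{-1} & 1 \end{pmatrix}$ on $\begin{bmatrix} D \\ E \end{bmatrix}$ and $\begin{pmatrix} 1 & -\phi^{-1}\delta \\ 0 & 1 \end{pmatrix}$ on $\begin{bmatrix} B \\ C \end{bmatrix}$; one checks that the middle differential $\begin{pmatrix} \phi & \delta \\ \gamma & \epsilon \end{pmatrix}$ becomes $\begin{pmatrix} \phi & 0 \\ 0 & \epsilon - \gamma\phi^{-1}\delta \end{pmatrix}$, that the incoming map $\begin{pmatrix} \alpha \\ \beta \end{pmatrix}$ becomes $\begin{pmatrix} \alpha - \phi^{-1}\delta\beta \\ \beta \end{pmatrix}$ — wait, more precisely one should also use the freedom to modify the map into $[A]$; the key point is that $d^2=0$ for the original complex forces $\phi\alpha + \delta\beta = 0$, hence $\alpha = -\phi^{-1}\delta\beta$ after composing with $\phi^{-1}$, so the transported incoming map is exactly $\begin{pmatrix} 0 \\ \beta \end{pmatrix}$; dually, $d^2=0$ on the outgoing side gives $\mu\phi + \nu\gamma = 0$, so the transported outgoing map is $\begin{pmatrix} 0 & \nu \end{pmatrix}$. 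Assembling these three automorphisms (identity everywhere outside the displayed segment) into a chain isomorphism gives (1) $\cong$ (2); the only thing to check is that the squares commute, which is the content of the $d^2=0$ identities just mentioned.

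Next, complex (2) splits as a direct sum of the contractible complex $0 \to [B] \xrightarrow{\phi} [D] \to 0$ and the complex (3). Since $\phi$ is an isomorphism, the two-term piece is null-homotopic (contracting homotopy $\phi^{-1}$), so the inclusion of (3) into (2) and the projection of (2) onto (3) are mutually inverse homotopy equivalences. Composing with the isomorphism from the first step shows (1) is homotopy equivalent to (3). This step is essentially formal once the splitting in (2) is visible, which it is by construction.

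The main obstacle is bookkeeping rather than conceptual: one must be careful that the ``four-term segment'' is really a segment of a longer complex, so the automorphisms used must be the identity on all other terms, and one has to confirm that no new components of the differential are created between $[A]$ (resp. $[F]$) and terms further out — this follows because those automorphisms act as the identity there, but it should be stated. I also want to double-check signs and the precise form of $\epsilon - \gamma\phi^{-1}\delta$ (the ``Schur complement''), and note that the homotopy $\phi^{-1}$ indeed squares correctly against the differential; these are routine but worth a line. A clean alternative, which I would mention, is simply to cite \cite{BN2} where this ``Gaussian elimination'' lemma is proved in exactly this generality.
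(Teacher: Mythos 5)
Your proposal is correct and takes essentially the same route as the paper's own (very terse) proof: a change of basis on the middle two terms that diagonalizes the middle differential, followed by stripping off the contractible summand $[B]\xrightarrow{\phi}[D]$. On the sign you flagged: with $P=\begin{pmatrix}1&0\\-\gamma\phi^{-1}&1\end{pmatrix}$ on $\begin{bmatrix}D\\E\end{bmatrix}$, the compatible automorphism on $\begin{bmatrix}B\\C\end{bmatrix}$ is $\begin{pmatrix}1&\phi^{-1}\delta\\0&1\end{pmatrix}$ (the inverse of the matrix you wrote), which sends the incoming map to $\begin{pmatrix}\alpha+\phi^{-1}\delta\beta\\ \beta\end{pmatrix}=\begin{pmatrix}0\\ \beta\end{pmatrix}$ by $d^2=0$, exactly as you anticipated.
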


\emph{Proof:} The matrices in complexes $(1)$ and $(2)$ differ by a
change of bases, and hence the complexes are isomorphic. $(2)$ and
$(3)$ differ by the removal of a contractible summand; hence, they
are homotopy equivalent. $\square$

\begin{figure} [!htbp]
\centerline{
\includegraphics[scale=.7]{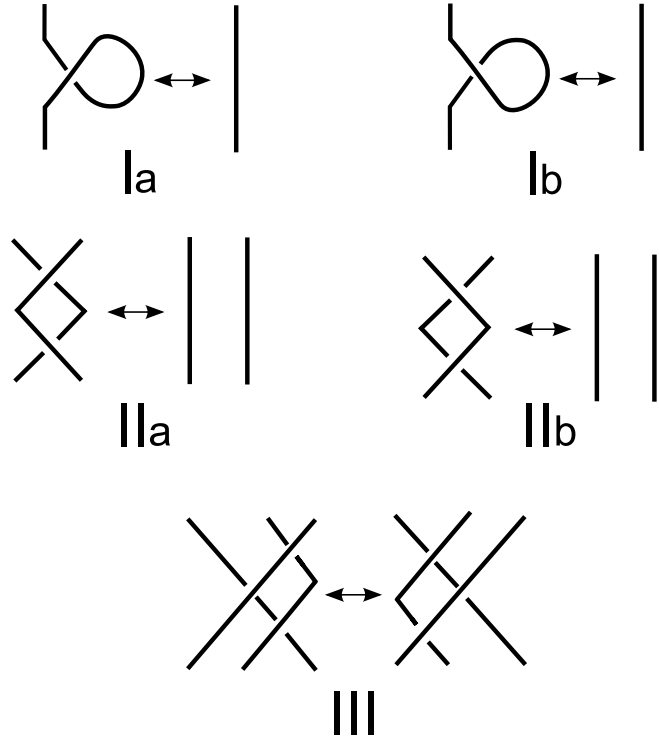}}
\caption{The Reidemeister moves}
\label{Rmoves}
\end{figure}\

\subsection{Reidemeister I}
\label{subsec-R1}

\begin{proof}
The complex $C(D_{I_a})$ for the left-hand side braid in Reidemester Ia, see figure \ref{Rmoves}, has the form 

\begin{displaymath}
\xymatrix{
\Z[x_1, x_2] \{0, -2, 0\}  \ar[rrr]^{0}    & & & \Z[x_1, x_2] \{0, 0, 0\}   \\
\Z[x_1, x_2] \{2, -2, -2\}   \ar[rrr]^{0}  \ar[u]^{(x_2 - x_1)}  & & & \Z[x_1, x_2] \{0, 0, -2\} \ar[u]^{1} }
\end{displaymath}

Up to homotopy, the right-hand side of the complex dissapears and only the top left corner survives after quotioning out by the relation $x_2 - x_1$. Note that the overall degree shifts of the total complex make sure that the left-over entry sits in the correct tri-grading.

Similarly, the complex $C(D_{I_b})$ for the left-hand side braid in Reidemester Ib, has the form

\begin{displaymath}
\xymatrix{
\Z[x_1, x_2] \{0, -2, 2\}  \ar[rrr]^{0}    & & & \Z[x_1, x_2] \{-2, 0, 2\}   \\
\Z[x_1, x_2] \{0, -2, 0\}   \ar[rrr]^{0}  \ar[u]^{1}  & & & \Z[x_1, x_2] \{0, 0, 0\} \ar[u]^{(x_2 - x_1)} }
\end{displaymath}

The left-hand side is annihilated and the upper-right corner remains modulo the relation $x_2 - x_1$. 
\end{proof}

\subsection{Reidemeister II}
\label{subsec-R2}
\begin{proof}
Lets first consider the braid diagrams for Reidemeister type IIa in figure \ref{Rmoves}. Recall the decomposition $B_i \otimes B_i \cong B_i\{-1\} \oplus B_i\{1\}$ in $\mc{SC}_2$  and its pictorial counterpart \ref{decomp1d}. The complex we are interested in is

\begin{figure} [!htbp]
\centerline{
\includegraphics[scale=.7]{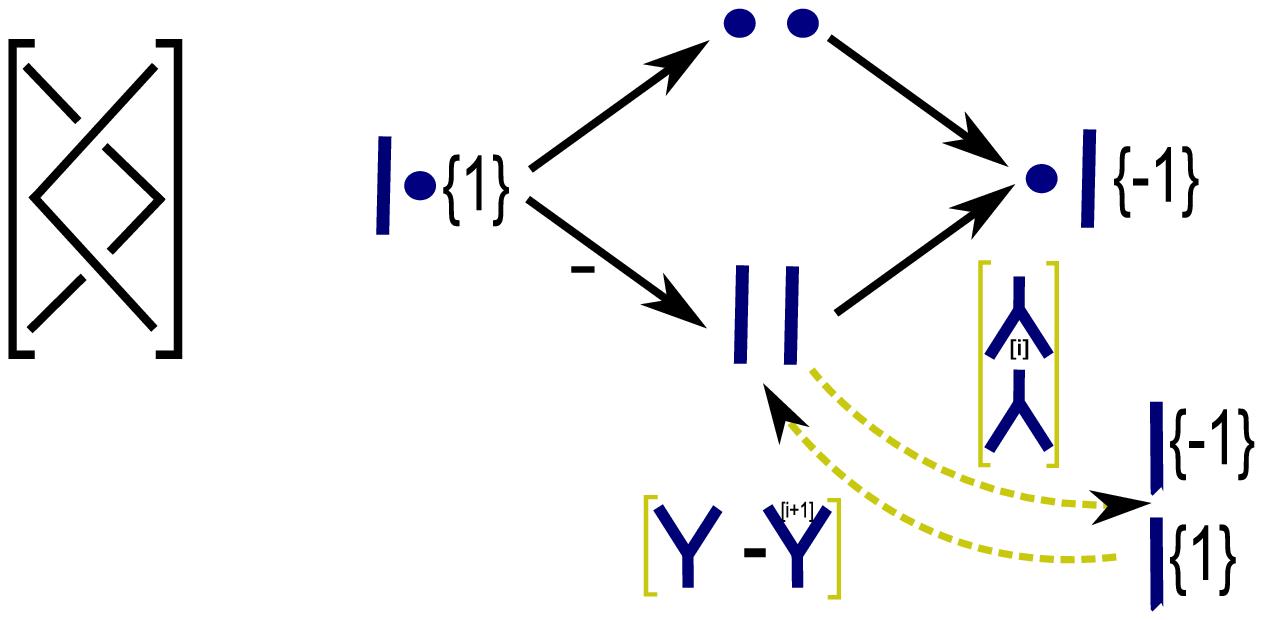}}
\caption{Reidemeister IIa complex with decomposition \ref{decomp1d}}
\label{R2Aa}
\end{figure}\

Inserting the decomposed $B_i \otimes B_i$ and the corresponding maps, we find two isomorphisms staring at us; we pick the left most one and mark it for removal.

\begin{figure} [!htbp]
\centerline{
\includegraphics[scale=.7]{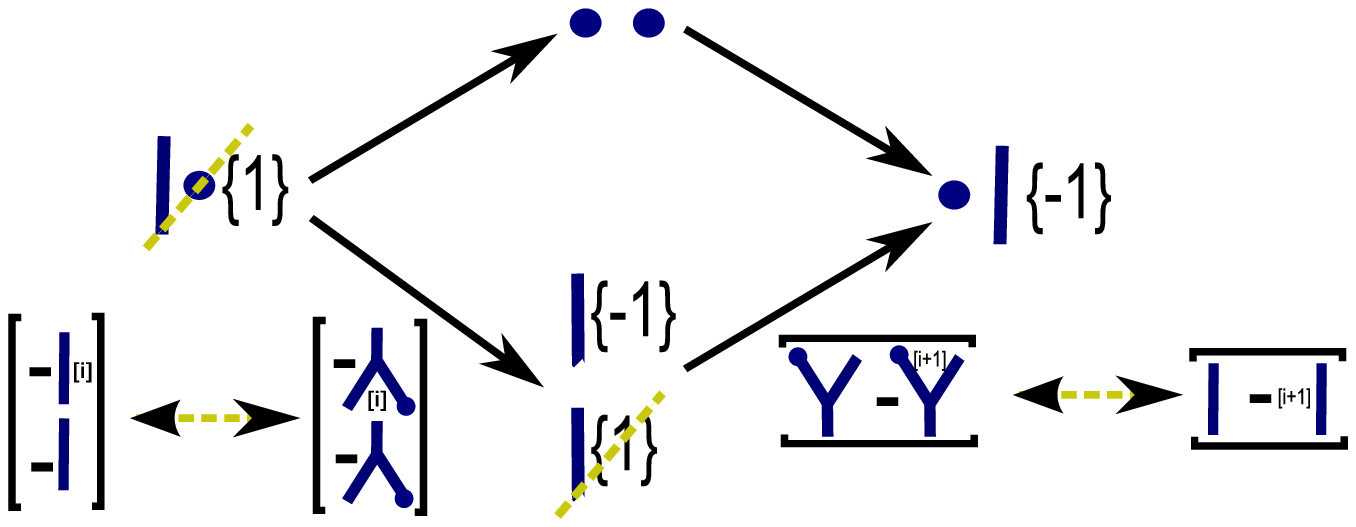}}
\caption{Reidemeister IIa complex, removing one of the acyclic subcomplexes}
\label{R2Ab}
\end{figure}\

After changing basis and removing the acyclic complex, as in Lemma \ref{lemma-GE}, we arrive at the complex below with two more entries marked for removal. 

\begin{figure} [!htbp]
\centerline{
\includegraphics[scale=.7]{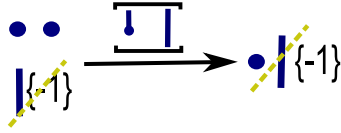}}
\caption{Reidemeister IIa complex, removing a second acyclic subcomplex}
\label{R2Ac}
\end{figure}\

With the marked acyclic subcomplex removed, we arrive at our desired result, the complex assigned to the no crossing braid of two strands as in figure \ref{Rmoves}.  The computation for Reidemeister IIb is virtually identical. \end{proof}

\subsection{Reidemeister III}
\label{subsec-R3}

\begin{proof}
Luckily, we only have to check one version of Reidemeister move III, but as the reader will see below even that is pretty easy and not much harder than that of Reidemeister II above. We follow closely the structure of the proof in \cite{KR}, utilizing the bimodule $R \otimes_{R^{i,i+1}} R \{-3\}$ and decomposition \ref{decomp2d} to reduce the complex for one of the RIII braids to that which is invariant under the move or, equivalently in our case, invariant under color flip. We start with the braid on the left-hand side of III in figure \ref{Rmoves};  the corresponding complex, with decomposition \ref{decomp1d} and \ref{decomp2d} given by dashed/yellow arrows, is

\begin{figure} [!htbp]
\centerline{
\includegraphics[scale=.7]{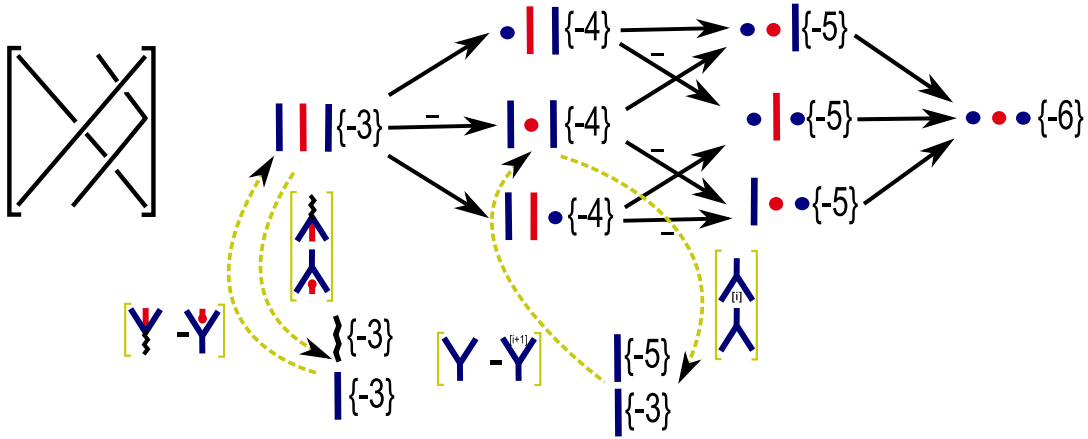}}
\caption{Reidemeister III complex with decompositions \ref{decomp1d} and \ref{decomp2d}}
\label{R3a}
\end{figure}\

We insert the decomposed bimodules and the appropriate maps; then we change bases as in Lemma \ref{lemma-GE} (the higher matrix of the two is before base-change, and the lower is after).

\begin{figure} [!htbp]
\centerline{
\includegraphics[scale=.8]{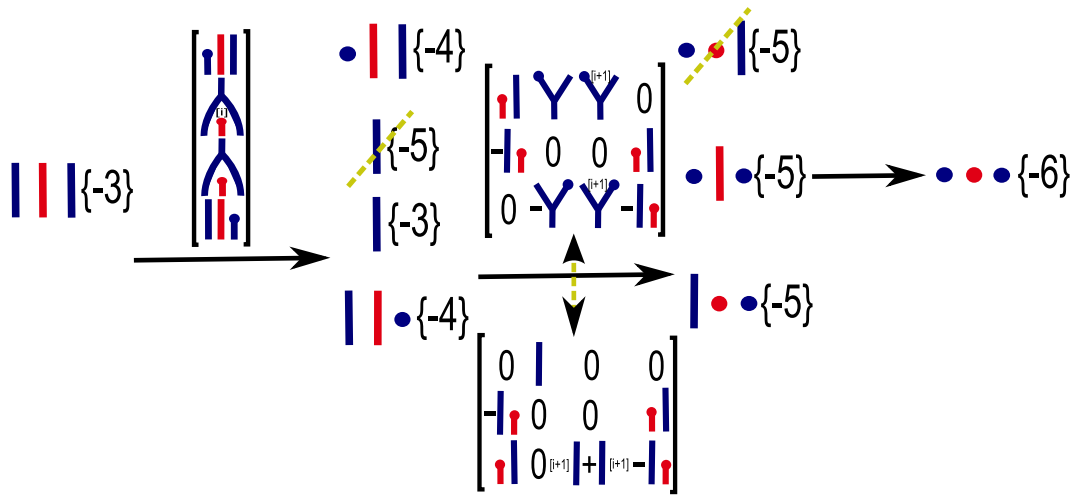}}
\caption{Reidemeister III complex, with an acyclic subcomplex marked for removal}
\label{R3b}
\end{figure}\

\pagebreak

We strike out the acyclic subcomplex and mark another one for removal; yet again we change bases (the lower matrix is the one after base change).

\begin{figure} [!htbp]
\centerline{
\includegraphics[scale=.8]{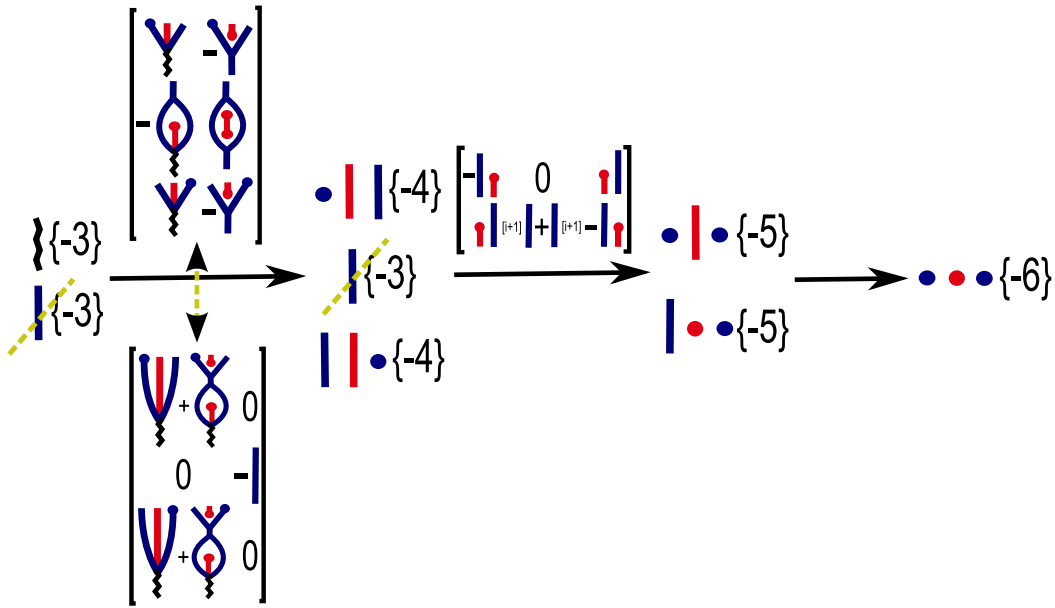}}
\caption{Reidemeister III complex, with another acyclic subcomplex marked for removal}
\label{R3c}
\end{figure}\

Now we are almost done; if we can prove that the maps

\begin{figure} [!htbp]
\centerline{
\includegraphics[scale=.8]{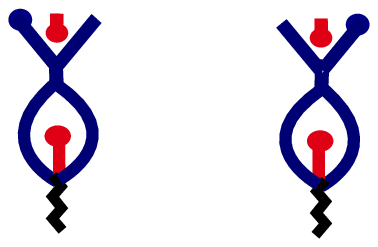}}
\label{R3e}
\end{figure}\

\hspace{-8mm} are invariant under color change, we would arrive at a complex that is invariant under Reidemeister move III. To do this we must stop for a second, go back to the source and examine the original, algebraic, definitions of the morphisms in \cite{EKh}; upon doing so we are relieved to see that the maps we are interested in are actually equal to zero (they are defined by sending $1 \otimes 1 \longmapsto 1 \otimes 1 \otimes 1 \otimes 1 \longmapsto 1 \otimes 1 \otimes 1 \longmapsto 0$). In all, we have arrived at

\begin{figure} [!htbp]
\centerline{
\includegraphics[scale=.7]{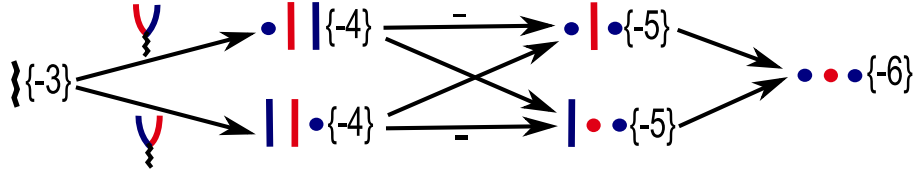}}
\caption{Reidemeister III complex - the end result, after removal of all acyclic subcomplexes}
\label{R3a}
\end{figure}\

Repeating the calculation for the braid on the right-hand side of RIII, figure \ref{Rmoves}, amounts to the above calculation with the  colors switched - a quick glance will convince the reader that the end result is the same complex rotated about the $x$-axis.
\end{proof}

\subsection{Observations}
\label{subsec-obs}

Having seen this interplay between the different constructions, perhaps it is a good moment to highlight exactly what categories we do need to work in so as to arrive at a genuine link invariant, or a braid invariant at that. Well, let us start with the latter: we can take the category of complexes of Soergel bimodules $\mc{KOM}(\mc{SC})$ (either the diagrammatic or ``original" version) and construct Rouquier complexes; if we mod out by homotopies and work  in $\mc{KOM}_h(\mc{SC})$, we arrive at something that is not only an invariant of braids but of braid cobordisms as well (over $\Z$ or $\Q$ if we wish). Now if we repeat the construction in the category of complexes of graded matrix factorizations $\mc{KOM}(\mathit{mf})$, we have some choices of homotopies to quotient out by. First, we can quotient out by the homotopies in the category of graded matrix factorizations and work in $\mc{KOM}(\mathit{hmf})$ and second, we can quotient in the category of the complexes and work in  $\mc{KOM}_h(\mathit{mf})$, or we can do both and work in $\mc{KOM}_h(\mathit{hmf})$. It is immediate that working in $\mc{KOM}_h(\mathit{mf})$ is necessary, but one could hope that it is also sufficient. A close look at the argument of  Claim \ref{claim-equivalence}, where the two constructions are proven equivalent, shows that if we start with the Koszul complex  associated to the resolution of a braid $D_{res}$  the polynomial relations coming from the singular vertices in $D_{res}$ form a regular sequence and, hence, the homology of this complex is the quotient of the edge ring $R(D_{res})$ by these relations and is supported in the right-most degree. It is this quotient that is isomorphic to the corresponding Soergel bimodule, i.e. the Koszul complex is quasi-isomorphic, as a bimodule, to $B'(D_{res})$. Hence, we really do need to work in $\mc{KOM}_h(\mathit{hmf})$, to have a braid invariant or an invariant of braid cobordisms, or a link invariant. 

Anyone, who has suffered throught the proofs of, say, Reidemeister III in \cite{KR} would probably find the above a relief. Of course, much of the ease in computation using this pictorial language is founded upon the intimate understanding and knowledge of hom spaces between objects in $\mc{SC}$, which is something that is only available to us due to the labors Elias and Khovanov in \cite{EKh}. With that said, it would not be suprising if this diagrammatic calculus would aid other calclulations of link homology in the future.

All in all we have arrived at an integral version of HOMFLY-PT link homology; combining with the results of \cite{EK} we have the following:

\begin{theorem}

Given a link $L \subset S^3$, the groups $H(L)$ and $\olH(L)$ are invariants of $L$ and when tensored with $\Q$ are isomorphic to the unreduced and reduced versions, respectively, of the Khovanov-Rozansky HOMFLY-PT link homology. Moreover, these integral homology theories give rise to functors from the category of braid cobordisms to the category of complexes of graded $R$-bimodules.
\end{theorem}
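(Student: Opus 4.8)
The strategy is to reduce topological invariance to Markov's theorem, to deduce the $\Q$-comparison by flat base change, and to import the functoriality statement from \cite{EK} through the dictionary of Claim \ref{claim-equivalence}. Every link is the closure of a braid, and two braids have isotopic closures iff they are connected by braid-group relations together with the Markov moves (conjugation $\beta\mapsto\sigma\beta\sigma^{-1}$, and stabilization $\beta\in B_m\mapsto\beta\sigma_m^{\pm1}\in B_{m+1}$). Among the braid-group relations, far commutativity $\sigma_i\sigma_j=\sigma_j\sigma_i$ for $|i-j|\ge 2$ is immediate, since the corresponding tensor factors of $C(D)$ involve disjoint sets of variables and literally commute; the braid relation $\sigma_i\sigma_{i+1}\sigma_i=\sigma_{i+1}\sigma_i\sigma_{i+1}$ is Reidemeister III and the cancellation $\sigma_i\sigma_i^{-1}=\id$ is Reidemeister II. Both were established in Sections \ref{subsec-R2} and \ref{subsec-R3} as homotopy equivalences of diagrammatic Rouquier complexes; since tensoring with the remaining crossing complexes and then applying Hochschild homology are functors, they preserve homotopy equivalences, so by Claim \ref{claim-equivalence} these moves leave $H(L)$ and $\olH(L)$ unchanged. (For the reduced theory one also uses, as recorded in Section \ref{subsec-mfconstruction}, that $\olC(D,i)\cong\olC(D)$ independently of the chosen edge $i$.)

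Markov I is handled by the trace property of Hochschild homology: by Claim \ref{claim-equivalence}, $\olH(L)$ is the homology of $HH(R,B_\beta)$ where $B_\beta$ is the complex of Rouquier bimodules of $\beta$, and $HH_*(R,M\otimes_R N)\cong HH_*(R,N\otimes_R M)$; conjugating by $\sigma_i$ and then cancelling $B_{\sigma_i^{-1}}\otimes_R B_{\sigma_i}\simeq R$ (Reidemeister II again, preserved by $HH$) gives invariance. Markov II is a Reidemeister I move on the new strand in the closure: Section \ref{subsec-R1} shows at the level of $C(D)$ that the curl complex reduces, up to homotopy and a fixed tri-grading shift, to the complex of the destabilized diagram. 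Under stabilization $w$ changes by $\pm 1$ and $b$ by $+1$, and the normalization factors $\{-w+b,\,w+b-1,\,w-b+1\}$ (resp. $\{-w+b-1,\,w+b-1,\,w-b+1\}$) in the definitions of $H(L)$ and $\olH(L)$ are designed exactly so that this shift is absorbed; verifying this bookkeeping for both signs of the kink completes the proof that $H(L)$ and $\olH(L)$ are link invariants.

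For the comparison over $\Q$: as noted right after the definition of $C(D)$, base-changing the edge ring and the differentials from $\Z$ to $\Q$ identifies $\olC(D)\otimes_\Z\Q$ with the reduced Khovanov--Rozansky HOMFLY-PT complex of \cite{KR} in Rasmussen's normalization \cite{Ras2}, up to the grading conventions recorded at the end of Section \ref{subsec-sbconstruction}, and likewise for the unreduced complex. Since $\Q$ is flat over $\Z$ one has $H_*(C\otimes_\Z\Q)\cong H_*(C)\otimes_\Z\Q$ for every complex $C$ of abelian groups; applying this to the two successive homologies in $H(H(\olC(D),d_+),d_v^*)$ yields $\olH(L)\otimes_\Z\Q\cong\olH_{KR}(L)$ and $H(L)\otimes_\Z\Q\cong H_{KR}(L)$.

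Finally, by \cite{EK} the assignment of a diagrammatic Rouquier complex to a braid extends to a functor from the category of braid cobordisms to $\mc{KOM}_h(\mc{SC})$: an elementary cobordism induces a chain map, well defined up to homotopy, and these compose correctly. Post-composing with the functor $C\mapsto HH(C)$ (which, being a functor, sends homotopic chain maps to homotopic ones and hence does not see the residual ambiguity) produces, via Claim \ref{claim-equivalence}, functors with values in complexes of graded $R$-bimodules whose homologies are $H(L)$ and $\olH(L)$; this is the asserted functoriality. I expect the main obstacle to be not any single step, each of which is short, but rather the grading bookkeeping in the Markov II / Reidemeister I step --- checking that the two normalization triples precisely compensate the shift produced by the curl reduction of Section \ref{subsec-R1}, for both signs --- together with the care needed to see that the homotopy equivalences proven in the Soergel-bimodule world (Sections \ref{subsec-R2}, \ref{subsec-R3}) and the one proven in the matrix-factorization world (Section \ref{subsec-R1}) are being applied to the very same object, which is exactly the role played by Claim \ref{claim-equivalence}.
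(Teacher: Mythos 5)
Your proposal is correct and fleshes out, in more detail than the paper itself supplies, what the paper leaves implicit: the text before the theorem merely points to Sections \ref{subsec-R1}--\ref{subsec-R3} for invariance and to \cite{EK} for functoriality, and does not spell out how Reidemeister checks for braid-like tangles, plus braid closure, actually yield link invariance. Your Markov-theorem framing fills exactly that gap, and in particular you are right to treat Markov~I separately: the paper never mentions conjugation, relying implicitly on the fact that the matrix-factorization complex of a closed diagram depends only on its combinatorics (so a cyclic permutation of the braid word gives literally the same closed complex, after which $\sigma_i^{-1}\sigma_i$ cancels by RII). Your alternative, the trace property $HH_*(R,M\otimes_R N)\cong HH_*(R,N\otimes_R M)$ applied after Claim~\ref{claim-equivalence}, is a clean and equivalent way to see it. The iterated flat-base-change argument for the $\Q$-comparison is also fine and is what the paper takes as read, given the remark that over $\Q$ the complex is ``the same underlying complex as in the original construction.''

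One small misstep is in the final functoriality paragraph. The functor asserted by the theorem, ``from the category of braid cobordisms to the category of complexes of graded $R$-bimodules,'' is the diagrammatic Rouquier complex functor of \cite{EK} itself (each term of a Rouquier complex is already a Soergel, hence $R$-, bimodule); there is no need to post-compose with $HH$, and doing so obscures the statement---after passing to Hochschild homology you would be in complexes of $R$-modules, and you would have left the realm where \cite{EK} gives a functor rather than a well-definedness-up-to-homotopy statement for closed diagrams. Dropping the $HH$ post-composition and citing \cite{EK} directly, as the paper does, is both simpler and what the theorem actually asserts.
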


\section{Rasmussen's spectral sequence and integral $sl(n)$-link homology}
\label{sec-ss}

It is time for us to look more closely at Rasmussen's spectral sequence from HOMFLY-PT to $sl(n)$-link homology. For this we need an extra ``horizontal" differential $d_-$ in our complex, and here is the first time we encounter matrix factorizations with a non-zero potential; as before, to a link diagram $D$ we will associate the tensor product of complexes of matrix factorizations with potential for each crossing. These will be complexes over the ring 
$$R_c = \Z[x_i, x_j, x_k, x_l]/(x_k + x_l - x_i - x_j) \cong \Z[x_i, x_j, x_k],$$

\hspace{-6.5mm}with total potential

$$W_p[x_i, x_j, x_k, x_l] = p(x_k) + p(x_l) - p(x_i) - p(x_j),$$

\hspace{-6.5mm}where the $p(x) \in \Z[x]$. We do not specify the potential $p(x)$ at the moment as the spectral sequence works for any choice; later on when looking at $sl(n)$-link homology we will set $p(x) = x^{n+1}$.

To define $d_-$, let $p_i = W_p/(x_k - x_i)$ and $p_{ij} = -W_p/(x_k - x_i)(x_k - x_j)$ (recall that in $R_c$,  $(x_k - x_i)(x_k - x_j) = x_i x_j - x_k x_l$, and note that these polynomials are actually in $R_c$).

To the positive crossing $D_+$ we assign the following complex:

\begin{displaymath}
\xymatrix{
R_c \{0, -2, 0\}  \ar@<.5ex>[rrr]^{(x_k - x_i)}    & & & R_c \{0, 0, 0\} \ar@<.5ex>[lll]^{p_i}  \\
R_c \{2, -2, -2\}   \ar@<.5ex>[rrr]^{-(x_k - x_i)(x_k - x_j)}  \ar[u]^{(x_j - x_k)}  & & & R_c \{0, 0, -2\} \ar@<.5ex>[lll]^{p_{ij}} \ar[u]^{1} }
\end{displaymath}

To the negative crossing $D_-$ we assign the following complex:

\begin{displaymath}
\xymatrix{
R_c \{0, -2, 2\}  \ar@<.5ex>[rrr]^{-(x_k - x_i)(x_k - x_j)}    & & & R_c \ar@<.5ex>[lll]^{p_{ij}} \{-2, 0, 2\}   \\
R_c \{0, -2, 0\}   \ar@<.5ex>[rrr]^{(x_k - x_i)}  \ar[u]^{1}  & & & R_c \{0, 0, 0\} \ar@<.5ex>[lll]^{p_i} \ar[u]^{(x_j - x_k)} }
\end{displaymath}

The total complex for a link $L$ with diagram $D$ will be defined analagously to the one above, i.e. 

$$ C_p(D) := \bigotimes_{crossings} (C(D_c) \otimes_{R_c} R(D)),$$

\hspace{-6.5mm}as will be the reduced $\olH_p(L,i)$ and unreduced $H_p(L)$ versions of link homology.

The main result of \cite{Ras2} is the following:

\begin{theorem} \{Rasmussen, \cite{Ras2}\}
Suppose $L \subset S^3$ is a link, and let $i$ be a marked component of $L$. For each $p(x) \in \Q[x]$, there is a spectral sequence $E_k(p)$ with $E_1(p) \cong \olH(L)$ and $E_\infty(p) \cong \olH_p(L,i)$. For all $k>0$, the isomorphism type of $E_k(p)$ is an invariant of the pair $(L,i)$.
\end{theorem}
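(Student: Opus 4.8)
Since this is verbatim Rasmussen's theorem, the plan is to recall the argument of \cite{Ras2} and observe that it transfers to the present setup without change; throughout one works with the reduced complex $\olC_p(D)$ of matrix factorizations with potential. The first point is that on a closed braid diagram the total potential $W_p$ vanishes in $R(D)$: every edge variable occurs once as an outgoing edge of a crossing (contributing $+p(\cdot)$ to the potential) and once as an incoming edge of another crossing (contributing $-p(\cdot)$), so the sum telescopes to $0$. Hence $(d_+ + d_-)^2 = W_p\cdot\mathrm{Id} = 0$, and with the usual Koszul signs $d_{tot} := d_+ + d_- + d_v$ is an honest differential on $\olC_p(D)$; its homology, with the overall grading shift, is by definition $\olH_p(L,i)$.

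Next I would filter $\olC_p(D)$ by Rasmussen's grading --- the grading $i-j$ in the notation of Section \ref{subsec-mfconstruction}, which is preserved by $d_+$ and $d_v$ but strictly raised by $d_-$ (multiplication by $p_i$ and $p_{ij}$ has positive $i-j$ degree, since $W_p$ is divisible by the relevant linear forms with positive-degree quotient once $p$ is non-linear). The associated spectral sequence has $E_0$-differential induced by the degree-$0$ part $d_+ + d_v$, so $E_1$ is the $(d_+ + d_v)$-homology of $\olC_p(D)$. Identifying this with $\olH(L)$ is the first substantive step: one must see that the auxiliary spectral sequence of the $(d_+, d_v)$ double complex, abutting to $H(\olC(D), d_+ + d_v)$ from $E_1^{\mathrm{aux}} = H(H(\olC(D), d_+), d_v^*) = \olH(L)$, collapses. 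This is exactly the mechanism of Claim \ref{claim-equivalence}: after passing to $d_+$-homology, each resolution contributes the quotient of its edge ring by a regular sequence, concentrated in a single horizontal degree, so the $d_+$-homology of the double complex sits on a single row and the remaining $d_v^*$-spectral sequence degenerates. Thus $E_1(p) \cong \olH(L)$, visibly independent of $p$, and the deformation $d_-$ first contributes from page $E_1$ onward.

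For convergence, $E_\infty(p) \cong \olH_p(L,i)$, the marked component enters. The filtration is exhaustive and Hausdorff but not bounded below as stated; however, because we are in the reduced theory (an edge variable on the marked component set to $0$), $\olH(L)$ is finitely generated --- in fact finite-dimensional over $\Q$ --- so the induced filtration on $E_1$, and on every subsequent page, is finite in each degree. The standard convergence criterion for filtered complexes then gives $E_\infty(p) \cong \mathrm{gr}\,\olH_p(L,i)$, hence an isomorphism of groups $E_\infty(p)\cong\olH_p(L,i)$. Invariance of $E_k(p)$ for $k>0$ is obtained by checking that each Reidemeister move is realized by a \emph{filtered} homotopy equivalence of $\olC_p(D)$: Reidemeister I is the Gaussian elimination of Section \ref{subsec-R1} applied to the potential-carrying complex, in which the discarded contractible summand is cancelled by an isomorphism of filtration degree $0$; Reidemeister II and III follow the matrix-factorization computations of \cite{KR}, now over $\Q$, the point being that every cancellation performed there is between summands of equal $i-j$ degree, so the homotopy equivalences persist once $d_-$ is switched on. A filtered homotopy equivalence induces an isomorphism of spectral sequences from $E_1$ onward, which gives the asserted invariance.

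I expect the two real obstacles to be exactly those at the heart of \cite{Ras2}: pinning $E_1$ down as $\olH(L)$ itself rather than as the total $(d_+ + d_v)$-homology (i.e. controlling the auxiliary double-complex spectral sequence), and handling the unboundedness of the filtration when proving convergence --- which is precisely why one passes to the reduced theory and marks a component. For both, the inputs needed (the Koszul/regular-sequence argument and the Gaussian-elimination framework) are already available from Claim \ref{claim-equivalence} and Section \ref{sec-rmoves}, so the task reduces to verifying that Rasmussen's argument goes through verbatim over $\Q$.
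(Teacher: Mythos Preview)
The paper does not prove this theorem: it is quoted from \cite{Ras2}, and the paper's own contribution is the integral analogue in the Claim that follows. That said, the paper does spell out how $E_k(p)$ is built, and it is not the construction you propose. In Definition~\ref{defn-ss} (and in Rasmussen's original) one first takes $d_+$-homology to obtain $\olH^+_p(D,i)$, and only then filters the resulting double complex $(\olH^+_p(D,i), d_v^*, d_-^*)$ by the horizontal grading $gr_h$. With that filtration $d_0 = d_v^*$, so $E_1 = H(\olH^+_p(D,i), d_v^*)$ is $\olH(L)$ \emph{by definition} --- no auxiliary collapse argument is required. Your approach instead filters the full complex $\olC_p(D)$ by $i-j$, which puts $d_+ + d_v$ on page zero and then obliges you to show separately that $H(\olC(D), d_+ + d_v) \cong H(H(\olC(D), d_+), d_v^*)$; this is exactly the ``auxiliary spectral sequence'' step you flag, and it is precisely what taking $d_+$-homology first is designed to avoid. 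Similarly, for invariance the paper (following \cite{Ras2}) exhibits the Reidemeister maps as morphisms $\sigma_j\colon \olH^+_p(D_j,i)\to \olH^+_p(D_j',i)$ in $\mc{KOM}(\mathit{mf})$ that are homotopy equivalences of $R$-modules, rather than filtered equivalences on $\olC_p$. Your route may well go through, but it is a detour relative to both the paper's setup and Rasmussen's, and you would additionally need to verify that the spectral sequence you produce agrees page-by-page with the one actually named in the statement.
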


In particular setting $p(x) = x^{n+1}$ one would arrive at a spectral sequence from the HOMFLY-PT to the $sl(n)$-link homology. Rasmussen's result pertains to rational link homology with matrix factorizations defined over the ring $\Q[x_1, \dots, x_n]$ and potentials polynomials in $\Q[x]$. We will essentially repeat his construction in our setting and, for the benefit of those familiar with the results of \cite{Ras2}, will stay as close as possible to the notation and conventions therein. This will be a rather condensed version of the story and we refer the reader to the original paper for more details.

We will work primarily with reduced link homology (although all the results follow through for both versions) and with closed link diagrams, where all three differentials $d_v$, $d_+$, and $d_-$ anticommute. We have some choices as to the order of running the differentials, so let us define

$$\olH^+(D, i) = H(\olC(D, i), d_+).$$

Here, $\olH^+(D, i)$ inherits a pair of anticommuting differentials $d^*_-$ and $d^*_v$, where $d^*_-$ lowers $gr_h$ by $1$ while preserving $gr_v$ and $d^*_v$ raises $gr_v$ by $1$ while preserving $gr_h$. Hence, $(\olH^+_p(D,i), d^*_v, d^*_-)$ defines a double complex with total differential $d_{v-} :=  d^*_v +  d^*_-$.  

\begin{defn}
\label{defn-ss}
Let $E_k(p)$ be the spectral sequence induced by the horizontal filtration on the complex $(\olH_p^+(D, i), d_{v-})$.
\end{defn}

After shifting the triple grading of $E_k(p)$ by $\{-w+b-1,w+b-1,w-b+1\}$ it is immediate that the first page of the spectral sequence is isomorphic to $\olH(L,i)$ (the part of the differential $d^*_v +  d^*_-$ which preserves horizontal grading on $E_0(p) = \olH^+(D, i)\{-w+b-1,w+b-1,w-b+1\}$ is precisely $d^*_v$, i.e. $d_0(p) = d^*_v$ and  $$E_1(p) = H(\olH^+(D, i), d^*_v)\{-w+b-1,w+b-1,w-b+1\} \cong \olH(L,i),$$ where $D$ is a diagram for $L$). It also follows that $d_k(p)$ is homogenous of degree $-k$ with respect to  $gr_h$ and degree $1-k$ with respect to $gr_v$, and in the case that $p(x) = x^{n+1}$ it is also homogeneous of degree $2nk$ with respect to the $q$-grading.

\begin{claim}
Suppose $L \subset S^3$ is a link, and let $i$ be a marked component of $L$. For each $p(x) \in \Z[x]$, the spectral sequence $E_k(p)$ has $E_1(p) \cong \olH(L,i)$ and $E_\infty(p) \cong \olH_p(L,i)$. For all $k>0$, the isomorphism type of $E_k(p)$ is an invariant of the pair $(L,i)$. 
\end{claim}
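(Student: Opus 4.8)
The plan is to run the same argument Rasmussen uses in \cite{Ras2}, checking that every ingredient survives the passage from $\Q$ to $\Z$. The first-page identification $E_1(p) \cong \olH(L,i)$ has already been established in the paragraph preceding the claim, directly from the structure of the horizontal filtration on $(\olH_p^+(D,i), d_{v-})$, so nothing further is needed there; that argument used only the fact that $d_0(p) = d^*_v$ and the definition of $\olH(L,i)$, both of which make sense over $\Z$. For the $E_\infty$ statement, the key point is that the double complex $(\olH^+_p(D,i), d^*_v, d^*_-)$ is a finitely generated complex of free $\olR(D)$-modules (equivalently, after reducing, of free abelian groups in each multidegree), so the horizontal filtration is bounded and the spectral sequence converges to the homology of the total differential $d_{v-} = d^*_v + d^*_-$; one then notes that this total homology is by definition $\olH_p(L,i)$ up to the grading shift $\{-w+b-1,w+b-1,w-b+1\}$. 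Convergence of a filtered complex with bounded filtration is a purely homological fact that holds over any commutative ring, so this step is identical to the rational case.

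Second, I would establish invariance of the isomorphism type of $E_k(p)$ for $k>0$ under Reidemeister moves. Here the strategy is to leverage the Reidemeister invariance of $\olH(L,i)$ (equivalently $\olC(D,i)$ up to homotopy) already proven in Section \ref{sec-rmoves}: a Reidemeister move induces a homotopy equivalence of the complexes $\olC(D,i)$ that is compatible with the extra differential $d_-$, hence induces an isomorphism of the double complexes $(\olH^+_p(D,i), d^*_v, d^*_-)$ up to filtered homotopy equivalence. A filtered homotopy equivalence induces an isomorphism of the associated spectral sequences from the $E_1$ page onward. Concretely, one reexamines each of the Reidemeister proofs — R1 via matrix factorizations in Section \ref{subsec-R1}, R2 and R3 via the Gaussian elimination Lemma \ref{lemma-GE} — and checks that the maps $p_i$ and $p_{ij}$ defining $d_-$ are carried along by the same change-of-basis and acyclic-summand-removal operations; the potential $W_p$ is a fixed polynomial, and all the relevant morphisms ($x_k - x_i$, multiplication maps, start-dot/end-dot) have coefficients in $\Z$, so nothing obstructs working over $\Z$. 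Since the Gaussian elimination lemma is stated and proved over an arbitrary additive category, the R2 and R3 arguments transfer verbatim; for R1 one checks directly that the surviving summand still carries the correct $d_-$.

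The main obstacle I anticipate is not any single hard computation but rather the bookkeeping required to verify that $d_-$ is genuinely compatible with the homotopy equivalences of Section \ref{sec-rmoves} over $\Z$ — that is, that the Gaussian eliminations used to prove R2 and R3 for the $d_+$-complex can be upgraded to eliminations of the full two-differential (or, after passing to $\olH^+$, that the induced maps $d^*_-$ match up). In the rational setting Rasmussen handles this by working with the homology $\olH^+(D,i)$ directly and invoking functoriality of the spectral sequence construction under homotopy equivalences of the underlying $d_+$-complex; the same reasoning applies here provided one knows that the homotopy equivalences produced in Sections \ref{subsec-R2} and \ref{subsec-R3} are honest chain homotopy equivalences of complexes of free $\olR(D)$-modules (which they are, being built from Lemma \ref{lemma-GE}), and that passing to $d_+$-homology is functorial. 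The remaining degree-counting assertions — that $d_k(p)$ has $gr_h$-degree $-k$, $gr_v$-degree $1-k$, and, when $p(x)=x^{n+1}$, $q$-degree $2nk$ — follow mechanically from the degrees of $d^*_v$ and $d^*_-$ recorded before the claim, and carry over without change since the gradings are defined identically over $\Z$. I would therefore organize the proof as: (i) convergence and the $E_1$, $E_\infty$ identifications via boundedness of the filtration; (ii) Reidemeister invariance via filtered homotopy equivalence, reducing to the integral Reidemeister proofs already given; (iii) a remark that the degree statements are inherited from the rational case.
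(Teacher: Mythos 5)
Your proposal matches the paper's approach: both reduce invariance of $E_k(p)$ for $k>0$ to the comparison theorem for spectral sequences (a morphism inducing an isomorphism on $E_1$ is an isomorphism for all $k>0$), invoke the already-proven Reidemeister homotopy equivalences of $\olC(D,i)$, handle R1 separately using $d_+=0$, and treat $E_\infty$ by boundedness of the horizontal filtration — exactly as in Rasmussen's section 5.4 adapted to $\Z$. The paper is somewhat terser, deferring the compatibility of $d_-$ with the R2/R3 homotopy equivalences to the explicit chain maps of \cite{EK} and deferring the "depends only on the marked component, not the edge" assertion to Rasmussen, but the essential strategy and order of steps are the same as yours.
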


\begin{proof} 
We argue as in \cite{Ras2} section 5.4. Suppose that we have two closed diagrams $D_j$ and $D_j'$ that are related by the $j$'th Reidemeister move, and suppose that there is a morphism 
$$\sigma_j:\olH^+_p(D_j, i) \rightarrow \olH^+_p(D_j',i)$$
in the category  $\mc{KOM}(\mathit{mf})$ that extends to a homotopy equivalence in the category of modules over the edge ring $R$. Then $\sigma_j$ induces a morphism of spectral sequences $(\sigma_j)_k: E_k(D_j, i, p) \rightarrow E_k(D_j', i, p)$ which is an isomorphism for $k>0$. See \cite{Ras2} for more details and discussion. Hence, in practice we have to exhibit morphisms and prove invariance for the first page of the spectral sequence, i.e. for the HOMLFY-PT homology, which is basically already done. However, we ought to be a bit careful, of course, as here we are working with $\olH^+_p(D,i)$ and not with the complex $\olC(D,i)$ defined in section \ref{sec-rmoves}.

Reidemeister I is done, as in this case $d_+=0$ and, hence, the complex $\olH^+_p(D,i) = \olC_p(D,i)$ and the same argument as the one in section \ref{subsec-R1} works here.

For Reidemesiter II and III, we have to observe that for a closed diagram we have morphisms $\sigma_j: \olC_p(D_j,i) \rightarrow \olC_p(D_j',i)$ for $j = II, III$, which are homotopy equivalences of complexes (these can be extrapolated from section \ref{sec-rmoves} above, or from \cite{EK}, where all chain maps are exhibited concretely). Therefore we get induced maps $(\sigma_j)_k$ on the spectral sequence with the property that $(\sigma_j)_1 = \sigma_{j*}$ is an isomorphism. 

To get the last part of the claim, i.e. that the reduced homology depends only on the link component and not on the edge therein we refer the reader to \cite{Ras2}, as the arguments from there are valid verbatum. \end{proof}

Setting $p(x) = x^{n+1}$, we get that the differentials $d_k(p)$ preserve $q+2ngr_h$ and, hence, the graded Euler characteristic of $H(\olH^+_p(D,i), d_{v-})$ with respect to this quantity is the same as that of $E_1(x^{n+1})$. Tensoring with $\Q$, to get rid of torsion elements, and computing we see that the Euler characteristic of the $E_\infty(x^{n+1})$ is the quantum $sl(n)$-link polynomial $P_L(q^n, q)$ of $L$. See \cite{Ras2} section $5.1$ for details.  We have arrived at:

\begin{theorem}
The $E_\infty(x^{n+1})$ of the spectral sequence defined in \ref{defn-ss} is an invariant of $L$ and categorifies the quantum $sl(n)$-link polynomial $P_L(q^n, q)$.
\end{theorem}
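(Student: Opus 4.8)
The plan is to assemble the theorem from the two ingredients already in hand: the invariance of the HOMFLY-PT homology $\olH(L)$ (equivalently $\olH(L,i)$), established via the Reidemeister moves in Section~\ref{sec-rmoves}, and the invariance statement of the preceding Claim, which asserts that for all $k>0$ the isomorphism type of $E_k(x^{n+1})$ is an invariant of the pair $(L,i)$. Since $E_\infty(x^{n+1})$ is obtained as the limit of the pages $E_k(x^{n+1})$, and each page for $k\geq 1$ is an invariant of $(L,i)$, it follows that $E_\infty(x^{n+1})$ is itself an invariant of $(L,i)$. The last point to address for the unreduced statement (or to promote $(L,i)$-invariance to $L$-invariance in the reduced case) is independence of the marked component/edge, which is precisely the content of the final sentence of the Claim's proof, where we invoked \cite{Ras2} section~5.4; I would simply cite that again.

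Next I would verify that $E_\infty(x^{n+1})$ categorifies the quantum $sl(n)$-polynomial $P_L(q^n,q)$. Here the key observation, already recorded in the paragraph immediately preceding the theorem, is that with $p(x)=x^{n+1}$ the differentials $d_k(x^{n+1})$ are homogeneous of degree $2nk$ with respect to the $q$-grading and of degree $-k$ with respect to $gr_h$, hence preserve the combined grading $q+2n\,gr_h$. Consequently the graded Euler characteristic with respect to $q+2n\,gr_h$ is unchanged from page to page: it is the same on $E_1(x^{n+1})$, on every finite page, and on $E_\infty(x^{n+1})$. Since $E_1(x^{n+1})\cong\olH(L,i)$ has graded Euler characteristic $P_L(a,q)$ after the substitution $a=q^n$ — this is the remark following the definition of reduced homology, stating $\sum (-1)^{(k-j)/2} a^j q^i \dim \olH^{i,j,k}(L) = P(L)$ — setting $a=q^n$ and collecting terms into the single grading $q+2n\,gr_h$ yields $P_L(q^n,q)$. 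Tensoring with $\Q$ throughout kills any torsion in the integral groups so that ``graded dimension'' makes sense as a Laurent polynomial computation; this is exactly the reduction performed in \cite{Ras2} section~5.1.

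The one genuinely substantive step, and the main obstacle, is the convergence claim that $E_\infty(x^{n+1})\cong\olH_{x^{n+1}}(L,i)$ — i.e. that the spectral sequence induced by the horizontal filtration on the double complex $(\olH^+_{x^{n+1}}(D,i),d_{v-})$ actually converges to the homology of the total differential $d_{v-}=d_v^*+d_-^*$. For a closed diagram the filtration is bounded (the horizontal grading $gr_h$ ranges over a finite window determined by the number of crossings and strands), so convergence is automatic once one checks boundedness; this is the point where I would again lean on the structure set up in \cite{Ras2} section~5.4 rather than reprove it, since all the needed homological bookkeeping is identical over $\Z$ to the case over $\Q$. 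Everything else — homogeneity of the differentials, the Euler characteristic count, the Reidemeister invariance — is either already proved in this paper or a verbatim transcription of \cite{Ras2}, so the proof of the theorem is essentially a matter of citing the Claim and the degree computation immediately above it, then reading off the Euler characteristic.
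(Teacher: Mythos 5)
Your first two paragraphs match the paper's argument exactly: the invariance of $E_\infty(x^{n+1})$ is deduced from the Claim (each $E_k$ with $k>0$ is an invariant of $(L,i)$, and the horizontal filtration is bounded so $E_\infty$ is a finite page), and the categorification statement follows from the homogeneity of the differentials $d_k(x^{n+1})$ with respect to $q+2n\,gr_h$, which forces the graded Euler characteristic, computed after tensoring with $\Q$, to stabilize at the value it already takes on $E_1(x^{n+1})\cong\olH(L,i)$, namely $P_L(q^n,q)$. That is precisely the paper's route, and it already finishes the proof of the theorem as stated.

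Your third paragraph, however, overclaims. You treat the identification $E_\infty(x^{n+1})\cong\olH_{x^{n+1}}(L,i)$ as a ``convergence'' step and assert that ``all the needed homological bookkeeping is identical over $\Z$ to the case over $\Q$.'' The paper's Remark immediately following the theorem says the opposite: it is genuinely unclear over $\Z$ whether $E_\infty$ is isomorphic to $H(H(H(\olC_{x^{n+1}}(D,i),d_+),d_-^*),d_v^*)$ or to $H(H(\olC_{x^{n+1}}(D,i),d_+ + d_-),d_v^*)$, because Rasmussen's argument for this identification over $\Q$ rests on a K\"unneth-type lemma that does not transfer to the integral setting. You are conflating two different facts: (a) boundedness of the filtration, which is elementary and gives $E_\infty=E_N$ for large $N$ and hence $E_\infty\cong H(\olH^+_p(D,i),d_{v-})$, and (b) the identification of that group with an independently defined $sl(n)$-style homology $\olH_{x^{n+1}}(L,i)$, which is (b) the hard, still-open step over $\Z$. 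The theorem is deliberately phrased to avoid needing (b): it asserts only that $E_\infty$ is an invariant and has the right Euler characteristic, both of which your first two paragraphs establish. Dropping the third paragraph, or replacing it with a note that (b) is precisely what the subsequent Remark flags as unresolved, makes your proposal both correct and faithful to the paper.
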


\begin{remark}
Well, we have a categorification over $\Z$ of the quantum $sl(n)$-link polynomial, but what homology theory exactly are we dealing with? Is it isomorphic to \\
 $H(H(H(\olC_{x^{n+1}}(D,i), d_+),d^*_-), d^*_v)$ or to  $H(H(\olC_{x^{n+1}}(D,i), d_+ + d_-), d^*_v)$ and are these two isomorphic here? The answer is not immediate. In \cite{Ras2}, Rasmussen bases the corresponding results on a lemma that utilizes the Kunneth formula, which is much more manageable in this context when looked at over $\Q$. Of course, for certain classes of knots things are easier. For example, if we take the class of knots that are \emph{KR-thin}, then the spectral sequence converges at the $E_1$ term, as this statemtent only depends on the degrees of the differentials, and we have that  $E_\infty(x^{n+1}) \cong H(H(\olC_{x^{n+1}}(D,i), d_+), d^*_v)$. However, that's a bit of a `red herring' as stated.
\end{remark}


\vspace{10mm}

\hspace{-5mm}\emph{Daniel Krasner, Department of Mathematics, Columbia University, New York, NY 10027} \\
\vspace{3mm}
E-mail: dkrasner@math.columbia.edu\\

\end{document}